\tikzset{Rightarrow/.style={double equal sign distance,>={Implies},->},
triple/.style={-,preaction={draw,Rightarrow}},
quadruple/.style={preaction={draw,Rightarrow,shorten >=0pt},shorten >=1pt,-,double,double
distance=0.2pt}}
\tikzset{mono/.style={>-stealth}} 
\tikzset{epi/.style={-twotriang}} 
\tikzset{twoarrowlonger/.style={double,double distance=1.5pt,
shorten <=5pt,shorten >=6pt,
decoration={markings,mark=at position -4pt with {\arrow[scale=1.75]{>}}},
preaction={decorate}}}
\theoremstyle{plain}   
\newtheorem{thm}{Theorem}[section] 
\let\c@thm\c@thm\makeatother
\let\c@cor\c@thm\makeatother
\newtheorem{lem}{Lemma}[section]
\let\c@lem\c@thm\makeatother
\newtheorem{prop}{Proposition}[section]
\let\c@prop\c@thm\makeatother
\let\c@claim\c@thm\makeatother
\let\c@question\c@thm\makeatother
\newtheorem*{unnumberedtheorem}{Theorem}
\theoremstyle{definition}
\newtheorem{defn}{Definition}[section]
\let\c@defn\c@thm\makeatother
\newtheorem{const}{Construction}[section]
\let\c@const\c@thm\makeatother
\newtheorem{notn}{Notation}[section]
\let\c@notn\c@thm\makeatother
\theoremstyle{remark}
\newtheorem{rmk}{Remark}[section]
\let\c@rmk\c@thm\makeatother
\newtheorem{ex}{Example}[section]
\let\c@ex\c@thm\makeatother
\let\c@observationn\c@thm\makeatother
\let\c@digression\c@thm\makeatother
\let\c@equation\c@thm
\numberwithin{equation}{section}
\newcommand{\newrefformat}[2]{}
\crefname{lem}{Lemma}{Lemmas}
\crefname{thm}{Theorem}{Theorems}
\crefname{defn}{Definition}{Definitions}
\crefname{notn}{Notation}{Notations}
\crefname{const}{Construction}{Constructions}
\crefname{prop}{Proposition}{Propositions}
\crefname{rmk}{Remark}{Remarks}
\crefname{cor}{Corollary}{Corollaries}
\crefname{equation}{Display}{Displays}
\crefname{ex}{Example}{Examples}
\newcommand{\cA}{\mathcal{A}}
\newcommand{\cB}{\mathcal{B}}
\newcommand{\cD}{\mathcal{D}}
\newcommand{\cE}{\mathcal{E}}
\newcommand{\cP}{\mathcal{P}}
\newcommand{\cS}{\mathcal{S}}
\newcommand{\set}{\cS\!\mathit{et}}
\newcommand{\sset}{\mathit{s}\set}
\DeclareMathOperator{\id}{id}
\DeclareMathOperator{\Hom}{Hom}
\DeclareMathOperator{\Map}{Map}
\DeclareFontFamily{OT1}{pzc}{}
\DeclareFontShape{OT1}{pzc}{m}{it}{<-> s * [1.10] pzcmi7t}{}
\DeclareMathAlphabet{\mathpzc}{OT1}{pzc}{m}{it}
\newcommand{\po}{\ar@{}[dr]|{\text{\pigpenfont R}}}
\newcommand{\pb}{\ar@{}[dr]|{\text{\pigpenfont J}}}
\tikzset{arrow/.style={-stealth}} 
\tikzset{arrowshorter/.style={-stealth, shorten <=2pt, shorten >=2pt}}
\title{The $S_\bullet$-construction as an equivalence\\
between $2$-Segal spaces and\\
stable augmented double Segal spaces}
\author{Martina Rovelli}
\address{Department of Mathematics and Statistics, University of Massachusetts Amherst, Amherst, USA}
\email{mrovelli@umass.edu} 
\begin{document}

\maketitle

\begin{abstract}

This note is a contribution for a proceedings volume of the workshop \emph{Higher Segal Spaces and their Applications to Algebraic K-Theory, Hall Algebras, and Combinatorics}. The content is a streamlined exposition based on a talk about a result by Bergner-Osorno-Ozornova-Rovelli-Scheimbauer from WITII. We discuss how a generalized version of Waldhausen's $S_\bullet$-construction describes a correspondence between $2$-Segal spaces and certain double Segal spaces, which satisfy further conditions of stability and augmentation. 
\end{abstract}

\tableofcontents

\section*{Introduction}

As the equivalent notions of a \emph{$2$-Segal space} \cite{DKbook} and of a \emph{decomposition space} \cite{GCKT1} arose, the most prominent example which was identified was that of \emph{Waldhausen's $S_\bullet$-construction} \cite{waldhausen} of categorical structures of a homological algebraic flavor. This was treated for Abelian categories in \cite[\textsection10]{GCKT1}, for (proto-)exact categories in \cite[\textsection2.4]{DKbook}, for exact and stable $\infty$-categories in \cite[\textsection7]{DKbook}.

Two natural questions arose:
\begin{enumerate}[leftmargin=*]
    \item Do all examples of $2$-Segal spaces arise as the $S_\bullet$-construction of a suitably generalized input?
    \item Does the $S_\bullet$-construction remember all the structure of the original input?
\end{enumerate}
In other words, it is natural to wonder to which degree the $S_\bullet$-construction is surjective and to which degree it is injective.

These questions were addressed by one of the teams of WITII -- Bergner--Osorno--Ozornova--Rovelli--Scheimbauer -- in \cite{BOORS1,BOORS3}. The team identified an appropriate class of double Segal spaces with further structure, called \emph{stable augmented double Segal spaces}, to which the $S_\bullet$-construction can be generalized, as well as an inverse construction: the \emph{path construction} $\cP$.

In this note, we outline the main ingredients for the proof of a soft version of the correspondence of $2$-Segal spaces and stable augmented double Segal spaces, which we state as \cref{EquivalenceV1}:

\begin{unnumberedtheorem}[Soft version]
The path construction $\cP$ and (a variant of) the $S_\bullet$-construction define inverse bijections
    \[
    \cP\colon\mathrm{2SegSp}/_\simeq\ \cong\ \mathrm{saDblSegSp}/_\simeq\colon S_\bullet\circ\widetilde{(-)}
    \]
between $2$-Segal spaces up to equivalence and stable augmented double Segal spaces up to equivalence.
\end{unnumberedtheorem}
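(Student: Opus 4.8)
The plan is to treat both maps on objects — which suffices since the statement only concerns equivalence classes — by writing down explicit point-set constructions, checking that each lands in the appropriate class and preserves levelwise equivalences, and then exhibiting the two equivalences witnessing that the round-trips are the identity. I would model a $2$-Segal space as a simplicial space $X$ and recall that the path construction produces the bisimplicial space $\cP(X)$ obtained by precomposing $X$ with the ordinal-sum (join) functor, so that $\cP(X)_{p,q} = X_{p+q+1}$, with the augmentation recorded by the values at the empty ordinal. Dually, $S_\bullet \circ \widetilde{(-)}$ sends a stable augmented double Segal space $Y$ to the simplicial space whose degree-$n$ value is a space of staircase diagrams in $Y$ indexed by the intervals of $[n]$ (an edgewise-subdivision-type reindexing), cut out using both simplicial directions of $Y$. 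The first task is well-definedness: that $\cP(X)$ satisfies the Segal conditions in both directions together with augmentation and stability when $X$ is $2$-Segal, and that $S_\bullet(\widetilde Y)$ is $2$-Segal when $Y$ is stable and augmented. Each reduces, through the combinatorics of these reindexings, to the hypothesis on the input, with the \emph{stability} condition being exactly what forces the squares appearing in the relevant limits to be homotopy (co)cartesian, and hence the comparison maps to be equivalences.

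Both constructions are assembled from homotopy limits of the input spaces along structure maps of the indexing categories, and such homotopy limits preserve levelwise equivalences; hence $\cP$ and $S_\bullet \circ \widetilde{(-)}$ carry equivalences to equivalences and descend to well-defined maps on equivalence classes. The replacement $\widetilde{(-)}$ serves only to rectify $S_\bullet$ to a model for which the subsequent comparisons hold without implicit fibrancy hypotheses, and is invisible on equivalence classes.

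It then remains to produce the equivalences $S_\bullet(\widetilde{\cP(X)}) \simeq X$ and $\cP(S_\bullet(\widetilde Y)) \simeq Y$. The first I would obtain by unwinding the composite degreewise: the space of staircase diagrams in $\cP(X)$ is, by the iterated Segal conditions that $\cP(X)$ inherits from $X$, equivalent to a single value of $X$, and these equivalences are compatible with the simplicial operators and so assemble into a levelwise equivalence of simplicial spaces. Once the Segal conditions are available this direction is essentially formal.

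The main obstacle is the reverse round-trip $\cP(S_\bullet(\widetilde Y)) \simeq Y$, where one must reconstruct the entire double structure of $Y$ — its augmentation and the homotopy-(co)cartesian squares encoding stability — from the single simplicial direction of $S_\bullet(\widetilde Y)$. The crux is that stability and augmentation make this apparently lost data redundant: each object, morphism, and square of $Y$ can be recovered, up to equivalence, from a suitable space of staircase diagrams, and I would show the comparison map is a levelwise equivalence by induction on bidegree, repeatedly invoking stability to identify these diagram spaces with the values $Y_{p,q}$. The delicate point — and the place where the full definition of a stable augmented double Segal space is used — is checking that these identifications are compatible with all the bisimplicial structure maps, so that they assemble into an equivalence of bisimplicial spaces rather than a mere degreewise one.
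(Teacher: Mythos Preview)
Your overall architecture matches the paper's: define $\cP$ and $S_\bullet\circ\widetilde{(-)}$, check they land in the right place and preserve equivalences, then exhibit the unit $X\simeq S_\bullet\widetilde{\cP X}$ and counit $\cP S_\bullet\widetilde{\cD}\simeq\cD$. Your treatment of the unit is also essentially the paper's: the key lemma is that for any injectively fibrant stable augmented double Segal space $\cD$ the ``select the first row'' map $\Map(\cP\Delta[n+1],\cD)\to\cD_{0,n}$ is an equivalence, which applied to $\cD=\widetilde{\cP X}$ gives $S_{n+1}\widetilde{\cP X}\simeq X_{n+1}$.

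Where you diverge is in the counit, and the paper's route is worth knowing because it neutralizes exactly the point you flag as delicate. You propose to build degreewise equivalences $(\cP S_\bullet\widetilde{\cD})_{a,b}\simeq\cD_{a,b}$ by induction on bidegree and then verify compatibility with all bisimplicial structure maps. The paper instead first writes down a genuine map of preaugmented bisimplicial spaces $\epsilon\colon\cP S_\bullet\cD\to\cD$ in one stroke --- in bidegree $(a,b)$ it is the ``select the internal $[a]\times[b]$ grid'' map $\Map(\cP\Delta[a+1+b],\cD)\to\cD_{a,b}$ --- so naturality is automatic and no compatibility checking is needed. It then observes that both source and target are stable augmented double Segal spaces, and for a map between such, being a levelwise equivalence is detected entirely at the bidegrees $-1$, $(0,0)$, $(0,1)$, $(1,0)$, $(1,1)$, since double Segality and stability express all higher values as iterated homotopy pullbacks of these. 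So the paper only has to verify five components of $\epsilon$ are equivalences, and each reduces to a short filtration argument using Segality, stability, and augmentation. Your induction would eventually arrive at the same place, but the paper's two moves --- build the map first, then reduce to low bidegrees --- make the ``delicate point'' disappear.
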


We also mention as \cref{EquivalenceV2} the stronger version of the result, which studies the correspondence between $2$-Segal spaces and stable augmented double Segal spaces at the level of $\infty$-categories:

\begin{unnumberedtheorem}[Strong version]
The path construction $\cP$ and (a variant of) the $S_\bullet$-construction define inverse equivalences of $\infty$-categories
\[
\bm{\cP}\colon\bm{\mathrm{2SegSp}}\simeq\bm{\mathrm{saDblSegSp}}\colon \bm{S_\bullet\circ\widetilde{(-)}}
\]
between the $\infty$-category of $2$-Segal spaces and the $\infty$-category of stable augmented double Segal spaces.
\end{unnumberedtheorem}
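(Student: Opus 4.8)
The plan is to upgrade the object-level bijection of the soft version (\cref{EquivalenceV1}) to an equivalence of $\infty$-functors. The first step is to present both sides as accessible localizations of presheaf $\infty$-categories: $\bm{\mathrm{2SegSp}}$ as the localization of $\Fun(\Delta^{\op},\Space)$ at the $2$-Segal maps, and $\bm{\mathrm{saDblSegSp}}$ as the localization of the $\infty$-category of augmented bisimplicial spaces at the horizontal and vertical Segal maps together with the stability and augmentation maps. The advantage of this presentation is that both $\cP$ and the variant $S_\bullet\circ\widetilde{(-)}$ are assembled from restriction and homotopy Kan extension along explicit functors between the two index categories -- concretely from the décalage and edgewise-subdivision operations $\Dec$ and $\esd$ -- and operations of this form are automatically $\infty$-functorial. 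The role of the rigidification $\widetilde{(-)}$ is to guarantee that $S_\bullet$ actually lands in the $2$-Segal localization; granting this, $\bm{\cP}$ and $\bm{S_\bullet\circ\widetilde{(-)}}$ are well-defined $\infty$-functors between the localized $\infty$-categories, and they are homotopy-colimit-preserving because restriction and left Kan extension are.

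The goal then becomes to produce natural equivalences $\bm{S_\bullet\circ\widetilde{(-)}}\circ\bm{\cP}\simeq\id$ and $\bm{\cP}\circ\bm{S_\bullet\circ\widetilde{(-)}}\simeq\id$. I would obtain these most economically by first exhibiting the two functors as an adjoint pair. At the level of presheaf $\infty$-categories such an adjunction comes from the restriction and left Kan-extension formulas out of which $\cP$ and $S_\bullet$ are built, and it descends to the localizations once one checks that the functors send local equivalences to local equivalences -- that $\cP$ carries $2$-Segal equivalences to equivalences of stable augmented double Segal spaces, and that $S_\bullet\circ\widetilde{(-)}$ does the reverse. With the adjunction in place it suffices to show that its unit and counit are equivalences, since an $\infty$-adjunction with invertible unit and counit is automatically an adjoint equivalence.

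This is where \cref{EquivalenceV1} does the decisive work: it supplies, for every $2$-Segal space $X$ and every stable augmented double Segal space $Y$, equivalences $\bm{S_\bullet\circ\widetilde{(-)}}\,\bm{\cP}(X)\simeq X$ and $\bm{\cP}\,\bm{S_\bullet\circ\widetilde{(-)}}(Y)\simeq Y$. The remaining task is to identify these object-wise equivalences with the components of the unit and counit -- equivalently, to verify that the comparison maps constructed in the proof of the soft version are natural and are the adjunction transformations. Since the localized $\infty$-categories are generated under colimits by the images of representables and both functors preserve colimits, it is then enough to test unit and counit on this generating data; the Segal, stability, and augmentation conditions collapse the relevant mapping spaces to low simplicial degree, where the round-trips can be computed by hand.

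I expect the main obstacle to be precisely this promotion from the $\pi_0$-statement of the soft version to coherent $\infty$-categorical data: \cref{EquivalenceV1} controls objects up to equivalence but not, a priori, the full mapping spaces, so one must show either that $\bm{\cP}$ and $\bm{S_\bullet\circ\widetilde{(-)}}$ are fully faithful or that the soft comparison maps assemble coherently into the unit and counit, and one must check that $\widetilde{(-)}$ is itself functorial. A robust way to package all of this is to realize the two constructions as a Quillen equivalence between model-categorical presentations of the two $\infty$-categories and to invoke that a Quillen equivalence induces an equivalence of underlying $\infty$-categories; the soft version is then exactly its shadow on homotopy categories. In either formulation the stability and augmentation hypotheses are what force the two round-trips to be contractible, and tracking them coherently through $\Dec$ and $\esd$ is where the genuine content of the strong version lies.
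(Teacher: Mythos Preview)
Your proposal eventually lands on the paper's approach: the paper proves the strong version precisely by exhibiting $\cP$ and $S_\bullet$ as a Quillen equivalence between appropriate model structures on simplicial spaces and on preaugmented bisimplicial spaces (those of \cite[\S4.2]{BOORS3} and \cite[\S5.3]{DKbook}), and then invoking that a Quillen equivalence induces a Dwyer--Kan equivalence of the underlying simplicial categories (via \cite{MazelGee}). Your closing paragraph is exactly this, so on that count you agree with the paper.

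Where you diverge is in the long preamble: you sketch a direct $\infty$-categorical argument via accessible localizations and an $\infty$-adjunction whose unit and counit would be checked using the soft version. This route is viable, and in fact the maps $\eta^h$ and $\epsilon$ of \cref{unit,counit} are already the (derived) unit and counit of the $(\cP,S_\bullet)$ adjunction, so the identification you worry about is not a serious obstacle. Two inaccuracies are worth flagging, though. First, $S_\bullet\cD=\Map(\cP\Delta[-],\cD)$ is the \emph{right} adjoint to $\cP$ (it is right Kan extension along the ordinal-sum functor $\Sigma\to\Delta$), so it preserves homotopy limits, not colimits; your reduction ``both functors preserve colimits, so test on generators'' fails for the counit direction, and you must instead check $\epsilon$ on all objects as in \cref{counit}. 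Second, the underlying index functor is ordinal sum (total d\'ecalage), not edgewise subdivision; the mention of $\esd$ is a red herring. The paper simply sidesteps all of this by staying in the model-categorical framework throughout and citing \cite[Theorem~6.1]{BOORS3} for the Quillen equivalence.
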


This note is written with the intention of being as elementary as possible, focusing on conveying intuition rather than describing the most general framework. The information presented is accurate; however, certain technical details have been excluded to favor exposition, referring the reader to the original source(s) for the complete proofs.

\addtocontents{toc}{\protect\setcounter{tocdepth}{1}}
\subsection*{Acknowledgements}

We are grateful to Julie Bergner, Joachim Kock, Mark Penney, and Maru Sarazola for organizing a wondeful workshop at the Banff International Research Station: \emph{Higher Segal Spaces and their Applications to Algebraic K-Theory, Hall Algebras, and Combinatorics} (24w5266, originally 20w5173). We are also thankful to Viktoriya Ozornova for useful feedback and assistance in writing this note, and to Claudia Scheimbauer for creating and letting us use the picture macros from our previous papers.
We acknowledge the support from the National Science Foundation under Grant No. DMS-2203915.

\addtocontents{toc}{\protect\setcounter{tocdepth}{2}}

\section{$2$-Segal spaces}

\label{Section2SegalSpaces}

In this section we recall the notion of a \emph{$2$-Segal space}, which was introduced in \cite{DKbook} and \cite{GCKT1} independently, and note the properties which will be used in the remainder of the paper. We also discuss various structures one can consider on the class of $2$-Segal spaces.

Let $\Delta$ denote the simplex category, and $\sset$ the category of simplicial sets, which we refer to as \emph{spaces}.

\begin{notn}
A \emph{simplicial space} is a functor $X\colon\Delta^{\mathrm{op}}\to\sset$. Explicitly, it consists of a space $X_n$ of \emph{$n$-simplices} for $n\geq0$, together with the following structure maps:
\begin{itemize}[leftmargin=*]
    \item the $i$-th \emph{face map} $d_i\colon X_n\to X_{n-1}$ for $n>0$ and $0\leq i\leq n$; and
    \item the $i$-th \emph{degeneracy map} $s_i\colon X_n\to X_{n+1}$ for $n\geq0$ and $0\leq i\leq n$.
\end{itemize}
These structure maps are subject to the usual
simplicial identities.
\end{notn}

In this note we take the following as the definition of a \emph{$2$-Segal space}. It was shown as \cite[Proposition 1.18]{BOORS3}
that it is equivalent to the original definition from \cite[Definition~2.3.1]{DKbook}.

\begin{defn}
\label{2SegalSpace}
    A simplicial space $X$ is a \emph{$2$-Segal space} if for all $n\geq2$ the canonical maps induce equivalences of spaces
\[
X_{n+1}\xrightarrow{\simeq} X_{\{0,1,2\}}\times^h_{X_{\{0,2\}}}X_{\{0,2,\dots,n+1\}}\text{ and } X_{n+1}\xrightarrow{\simeq} X_{\{n-2,n-1,n\}}\times^h_{X_{\{n-2,n\}}}X_{\{0,\dots,n-2,n\}}
\]
induced, respectively, by the commutative squares of spaces
\[
\begin{tikzcd}
X_{n+1}\arrow[r,""]\arrow[d,"" swap]&X_{\{0,2,\dots,n+1\}}\arrow[d,""]\\
X_{\{0,1,2\}}\arrow[r,"" swap]&X_{\{0,2\}}
\end{tikzcd}
\text{ and }
\begin{tikzcd}
X_{n+1}\arrow[r,""]\arrow[d,"" swap]&X_{\{0,\dots,n-2,n\}}\arrow[d,""]\\
X_{\{n-2,n-1,n\}}\arrow[r,"" swap]&X_{\{n-2,n\}}
\end{tikzcd}
\]
\end{defn}

\begin{rmk}
\label{unital}
It was shown in \cite{FGKPW} that every $2$-Segal space $X$ is \emph{unital} in the sense of \cite[Definition~2.5.2]{DKbook} (equivalently, every $2$-Segal space $X$ is a \emph{decomposition space} in the sense of \cite[Definition~3.1]{GCKT1}). This guarantees in particular
that there are equivalences of spaces
\[
(s_1,d_0)\colon X_1\xrightarrow{\simeq} X_2\times^h_{X_1}X_0\quad\text{ and }\quad (s_0,d_1)\colon X_1\xrightarrow{\simeq} X_2\times^h_{X_1}X_0
\]
induced, respectively, by the commutative squares
\[
\begin{tikzcd}
X_{1}\arrow[r,"d_0"]\arrow[d,"s_1" swap]&X_0\arrow[d,"s_0"]\\
X_2\arrow[r,"d_0" swap]&X_1
\end{tikzcd}
\quad\text{ and }\quad
\begin{tikzcd}
X_{1}\arrow[r,"d_1"]\arrow[d,"s_0" swap]&X_0\arrow[d,"s_0"]\\
X_2\arrow[r,"d_2" swap]&X_1
\end{tikzcd}
\]
\end{rmk}

The main result of this note will explain how there is an appropriate correspondence between the notion of a $2$-Segal space and that of a certain double Segal space, from which a variety of examples will arise.

To make this correspondence precise, we shall identify the appropriate categorical structure that $2$-Segal spaces assemble into. As a first approximation, one can consider an ordinary category:

\begin{rmk}
There is a category $\mathrm{2SegSp}$, in which:
\begin{itemize}[leftmargin=*]
    \item the objects are the $2$-Segal spaces;
    \item the morphisms are the maps of simplicial spaces between $2$-Segal spaces;
    \item the composition and identity operators are inherited from those in the category of simplicial spaces.
\end{itemize}
\end{rmk}

However, the category $\mathrm{2SegSp}$ of $2$-Segal spaces fails at encompassing the meaningful homotopy theory of such, and would in particular be insufficient to state the desired classification result.
One way to encode the homotopy theory of $2$-Segal spaces is to first define an appropriate notion of equivalence between $2$-Segal spaces:

\begin{defn}
Let $X$ and $Y$ be $2$-Segal spaces. A map of simplicial spaces $f\colon X\to Y$ is an \emph{equivalence} of $2$-Segal spaces if for all $n\geq0$ it defines an equivalence of spaces $f_n\colon X_n\xrightarrow{\simeq} Y_n$.
\end{defn}

With this notion of equivalence between $2$-Segal spaces, one can consider the set of equivalence classes of $2$-Segal spaces:

\begin{rmk}
Consider the relation on the class of $2$-Segal spaces given by declaring that $X$ is related to $Y$ if there exists an equivalence $f\colon X\xrightarrow{\simeq}Y$ of $2$-Segal spaces. It follows from the definitions that this relation is reflexive
and transitive, though without imposing further technical conditions it is not a priori symmetric.
Nevertheless, one can consider the set $\mathrm{2SegSp}/_\simeq$ of $2$-Segal spaces up to the equivalence relation it generates.
\end{rmk}

We will see as \cref{EquivalenceV1} that the set $\mathrm{2SegSp}/_\simeq$ of equivalence classes can be used to phrase an elementary version of the classification theorem. However, to state the most comprehensive version of the result, which we will mention as \cref{EquivalenceV2}, one should enhance the structure even further. Precisely, one should consider the $\infty$-category of $2$-Segal spaces, in which equivalences are formally inverted in an appropriate sense.

One way to incarnate the notion of an \emph{$\infty$-category} is in the form of a simplicial category. Recall that a \emph{simplicial category} is a category enriched over the category of simplicial sets. It consists in particular of a set of objects, and a mapping space between any two pairs of objects, together with identity and composition simplicial operators which satisfy associativity and unitality axioms.

Given a category with a class of equivalences, one can construct its simplicial localization in the form of a simplicial category. Various implementations of this procedure are available, and one of them is the \emph{hammock localization} from \cite{DwyerKanCalculating} (cf.~also \cite{Hinich}).

By simplicially localizing the category of $2$-Segal spaces at the class of equivalences of $2$-Segal spaces, we obtain a simplicial category in which all equivalences of $2$-Segal spaces are universally inverted in an appropriate weak sense:

\begin{const}
\label{2SegSp}
    The simplicial localization $\bm{\mathrm{2SegSp}}$ of the category of $2$-Segal spaces and equivalences $2$-Segal spaces
is a simplicial category in which:
    \begin{itemize}[leftmargin=*]
        \item The set of objects is given by the collection of the $2$-Segal spaces;
        \item Given $2$-Segal spaces $X$ and $Y$, the mapping space of maps from a $X$ to $Y$ is given by the space of hammocks from $X$ to $Y$.
        \item Composition and identity operators are given by concatenating hammocks, and taking the trivial hammock, respectively.
    \end{itemize}
\end{const}


\section{Stable augmented double Segal spaces}

In this section we discuss the notion of a \emph{stable augmented double Segal space} from \cite{BOORS3}, and note the properties which will be used in the remainder of the paper. We also discuss various structures one can consider on the class of stable augmented double Segal spaces.

Denote by $\Sigma$ the category obtained by freely adding a terminal object to $\Delta \times \Delta$, which could be visualized as follows:
\[
\begin{tikzcd}[scale=0.7, every label/.append style = {font = \tiny}]
{[-1]}  \\
\mbox{} & 
(0,0)
\arrow{lu}{} 
\arrow[r, arrow, shift left=1ex] \arrow[r, arrow, shift right=1ex] 
\arrow[d, arrow, shift left=1ex] \arrow[d, arrow, shift right=1ex]& 
(0,1)\arrow[l, arrow]  \arrow[r, arrow] \arrow[r, arrow, shift left=1.5ex] \arrow[r, arrow, shift right=1.5ex]
\arrow[d, arrow, shift left=1ex] \arrow[d, arrow, shift right=1ex]& 
(0,2) 
\arrow[l, arrowshorter, shift left=0.75ex] \arrow[l, arrowshorter, shift right=0.75ex] 
\arrow[d, arrow, shift left=1ex] \arrow[d, arrow, shift right=1ex]&[-0.8cm]\cdots\\
\mbox{} & 
(1,0) \arrow[r, arrow, shift left=1ex] \arrow[r, arrow, shift right=1ex]
\arrow[u, arrowshorter]
\arrow[d, arrow] \arrow[d, arrow, shift left=1.5ex] \arrow[d, arrow, shift right=1.5ex]
 & 
(1,1)  \arrow[l, arrow]  \arrow[r, arrow] \arrow[r, arrow, shift left=1.5ex] \arrow[r, arrow, shift right=1.5ex] 
\arrow[u, arrowshorter]
\arrow[d, arrow] \arrow[d, arrow, shift left=1.5ex] \arrow[d, arrow, shift right=1.5ex]& 
(1,2) \arrow[l, arrowshorter, shift left=0.75ex] \arrow[l, arrowshorter, shift right=0.75ex] 
\arrow[u, arrowshorter]
\arrow[d, arrow] \arrow[d, arrow, shift left=1.5ex] \arrow[d, arrow, shift right=1.5ex]&\cdots\\
\mbox{} & 
(2,0) \arrow[r, arrow, shift left=1ex] \arrow[r, arrow, shift right=1ex]
\arrow[u, arrowshorter, shift left=0.75ex] \arrow[u, arrowshorter, shift right=0.75ex] 
& 
(2,1)  \arrow[l, arrow]  \arrow[r, arrow] \arrow[r, arrow, shift left=1.5ex] \arrow[r, arrow, shift right=1.5ex]
\arrow[u, arrowshorter, shift left=0.75ex] \arrow[u, arrowshorter, shift right=0.75ex] & 
(2,2) \arrow[l, arrowshorter, shift left=0.75ex] \arrow[l, arrowshorter, shift right=0.75ex] 
\arrow[u, arrowshorter, shift left=0.75ex] \arrow[u, arrowshorter, shift right=0.75ex] &
\cdots\\[-0.7cm]
\mbox{} & \vdots & \vdots & \vdots &\ddots\\
\end{tikzcd}
\]

\begin{notn}
A \emph{preaugmented bisimplicial space} is a functor $\cD\colon\Sigma^{\mathrm{op}}\to\sset$. Precisely, it consists of a space $\cD_{a,b}$ of \emph{$(a,b)$-bisimplices} for $a,b\geq0$ and an \emph{augmentation space} $\cD_{-1}$, together with the following structure maps:
\begin{itemize}[leftmargin=*]
        \item the \emph{vertical $i$-th face map} $d^v_i\colon\cD_{n,\ell}\to\cD_{n-1,\ell}$ for $\ell\geq0$ and $n>0$,
        \item the \emph{vertical $i$-th degeneracy map} $s^v_i\colon\cD_{n,\ell}\to\cD_{n+1,\ell}$ for $\ell\geq0$ and $n\geq0$,
        \item the \emph{horizontal $i$-th face map} $d^h_i\colon\cD_{\ell,n}\to\cD_{\ell,n-1}$ for $\ell\geq0$ and $n>0$,
        \item the \emph{horizontal $i$-th degeneracy map} $s^h_i\colon\cD_{\ell,n}\to\cD_{\ell,n+1}$ for $\ell\geq0$ and $n\geq0$,
        \item the \emph{augmentation map} $\varepsilon\colon\cD_{-1}\to\cD_{0,0}$.
\end{itemize}
Face and degeneracy maps are subject to the usual bisimplicial identities, and the augmentation is not subject to any relation.
For further reference, we fix the notation for:
\begin{itemize}[leftmargin=*]
\item the \emph{vertical iterated top face map}
$d^v_\top\coloneqq d_n^v\circ d_{n-1}^v\circ\dots \circ d_{k+1}^v\colon\cD_{n,\ell}\to\cD_{k,\ell}$ for $\ell\geq0$ and $n\geq k\geq0$;
\item the \emph{horizontal iterated top face map}
$d^h_\top\coloneqq d_n^h\circ d_{n-1}^h\circ\dots \circ d_{k+1}^h\colon\cD_{\ell,n}\to\cD_{\ell,k}$ for $\ell\geq0$ and $n\geq k\geq0$;
\item the \emph{vertical iterated bottom face map}
$d^v_\bot\coloneqq d_0^v\circ d_0^v\circ\dots \circ d_0^v\colon\cD_{n,\ell}\to\cD_{k,\ell}$ for $\ell\geq0$ and $n\geq k\geq0$;
\item the \emph{horizontal iterated bottom face map}
$d^h_\bot\coloneqq d_0^h\circ d_0^h\circ\dots \circ d_0^h\colon\cD_{\ell,n}\to\cD_{\ell,k}$ for $\ell\geq0$ and $n\geq k\geq0$;
\end{itemize}
\end{notn}

\begin{rmk}
\label{Interpretation}
 If $\cD$ is a preaugmented bisimplicial space, we'd like to enforce the interpretation that:
    \begin{itemize}[leftmargin=*]
        \item $\cD_{0,0}$ is a space of objects, which we generically depict as
\begin{center}
    \begin{tikzpicture}[scale=0.7]
        \draw[thick] (6.5,0.5) rectangle (7.5,-0.5);
        \begin{scope}[xshift=4.5cm, yshift=-0.5cm]
            \draw[fill] (2.5, 0.5) circle (1pt) node(a12){};
            \end{scope}
    \end{tikzpicture}
\end{center}
        \item $\cD_{-1}$ is a subspace of distinguished objects (cf.~\cref{AugmentatonInjective}), which we generically depict as
\begin{center}
    \begin{tikzpicture}[scale=0.7]
        \draw[thick] (6.5,0.5) rectangle (7.5,-0.5);
        \begin{scope}[xshift=4.5cm, yshift=-0.5cm]
            \draw (2.5, 0.5) circle (1pt) node(a12){$*$};
            \end{scope}
    \end{tikzpicture}
\end{center}
        \item $\cD_{1,0}$ and $\cD_{0,1}$ are, respectively, a space of vertical morphisms and a space of horizontal morphisms, which we generically depict as
 \begin{center}
    \begin{tikzpicture}[scale=0.7]
        \draw[thick] (3,0.5) rectangle (4,-1.5);
        \begin{scope}[xshift=3.5cm, yshift=-0.5cm]
            \draw[fill] (0, -0.5) circle (1pt) node(a12){};
            \draw[fill] (0, 0.5) circle (1pt) node(a02){};
            \draw[epi] (a02)--(a12);
        \end{scope}
        \draw (6.5, 0) node[anchor=north east] (j5){\quad and \quad};
        \draw[thick] (7.5,0) rectangle (9.5,-1);
        \begin{scope}[xshift=5cm, yshift=-0.5cm]
            \draw[fill] (3, 0) circle (1pt) node(a02){};
            \draw[fill] (4, 0) circle (1pt) node (a03){};
            \draw[mono] (a02)--(a03);
        \end{scope}
    \end{tikzpicture}
\end{center}
        \item $\cD_{1,1}$ is a space of squares, which we picture as follows
 \begin{center}
    \begin{tikzpicture}[scale=0.7]
        \draw[thick] (6.5,0.5) rectangle (8.5,-1.5);
        \begin{scope}[xshift=4cm, yshift=-0cm]
            \draw[fill] (3, -1) circle (1pt) node(a12){};
            \draw[fill] (3, 0) circle (1pt) node(a02){};
            \draw[fill] (4, 0) circle (1pt) node (a03){};
            \draw[fill] (4, -1) circle (1pt) node (a13){};
            \draw[mono] (a02)--(a03);
            \draw[mono] (a12)--(a13);
            \draw[epi] (a02)--(a12);
            \draw[epi] (a03)--(a13);
            \begin{scope}[yshift=-0.3cm]
                \draw[twoarrowlonger] (3.2,0.1)--(3.8,-0.5);
            \end{scope}
        \end{scope}
    \end{tikzpicture}
\end{center}
        \item $\cD_{1,2}$ and $\cD_{2,1}$ are, respectively, spaces of horizontally composable pairs of squares with specified composites and horizontally composable pairs of squares with specified composites, which we picture as follows
 \begin{center}
    \begin{tikzpicture}[scale=0.7]
        \draw[thick] (1.5,-0) rectangle (4.5,-2);
        \begin{scope}[xshift=-1cm, yshift=-0.5cm]
            \draw[fill] (3, -1) circle (1pt) node(a12){};
            \draw[fill] (3, 0) circle (1pt) node(a02){};
            \draw[fill] (4, 0) circle (1pt) node (a03){};
            \draw[fill] (4, -1) circle (1pt) node (a13){};
             \draw[fill] (5, 0) circle (1pt) node (a04){};
            \draw[fill] (5, -1) circle (1pt) node (a14){};
            \draw[mono] (a02)--(a03);
            \draw[mono] (a12)--(a13);
            \draw[mono] (a13)--(a14);
            \draw[mono] (a03)--(a04);
            \draw[epi] (a02)--(a12);
            \draw[epi] (a03)--(a13);
            \draw[epi] (a04)--(a14);
            \begin{scope}[yshift=-0.3cm]
                \draw[twoarrowlonger] (3.2,0.1)--(3.8,-0.5);
                \draw[twoarrowlonger] (4.2,0.1)--(4.8,-0.5);
            \end{scope}
        \end{scope}
        \draw (6.5, -0.5) node[anchor=north east] (j5){\quad and \quad};
        \draw[thick] (7.5,0.5) rectangle (9.5,-2.5);
        \begin{scope}[xshift=5cm, yshift=-0cm]
            \draw[fill] (3, -1) circle (1pt) node(a12){};
            \draw[fill] (3, 0) circle (1pt) node(a02){};
            \draw[fill] (4, 0) circle (1pt) node (a03){};
            \draw[fill] (4, -1) circle (1pt) node (a13){};
            \draw[fill] (3, -2) circle (1pt) node (a04){};
            \draw[fill] (4, -2) circle (1pt) node (a14){};
            \draw[mono] (a02)--(a03);
            \draw[mono] (a12)--(a13);
            \draw[epi] (a13)--(a14);
            \draw[epi] (a03)--(a13);
            \draw[epi] (a02)--(a12);
            \draw[epi] (a12)--(a04);
            \draw[mono](a04)--(a14);
            \begin{scope}[yshift=-0.3cm]
                \draw[twoarrowlonger] (3.2,0.1)--(3.8,-0.5);
                \draw[twoarrowlonger] (3.2,-0.9)--(3.8,-1.5);
            \end{scope}
        \end{scope}
    \end{tikzpicture}
\end{center}
    \end{itemize}
Under this interpretation, we would like to think of
    \begin{itemize}[leftmargin=*]
     \item the augmentation map $\varepsilon\colon\cD_{-1}\to\cD_{0,0}$ as a (homotopy-)inclusion of $\cD_{-1}$ into $\cD_{0,0}$;
     \item the face map $d^v_1\colon\cD_{1,0}\to\cD_{0,0}$ (resp.~$d^h,1\colon\cD_{0,1}\to\cD_{0,0}$)  as the source map for horizontal morphisms (resp.~the source map for vertical morphisms);
 \item the face map $d^v_0\colon\cD_{1,0}\to\cD_{0,0}$ (resp.~$d^h_0\colon\cD_{0,1}\to\cD_{0,0}$)  as the target map for horizontal morphisms (resp.~the target map for vertical morphisms);
    \item the face map $d^v_1\colon\cD_{1,1}\to\cD_{0,1}$ (resp.~$d^h_1\colon\cD_{1,1}\to\cD_{1,0}$) as the horizontal source map for squares (resp.~the vertical source map for squares);
 \item the face map $d^v_0\colon\cD_{1,1}\to\cD_{0,1}$ (resp.~$d^h_0\colon\cD_{1,1}\to\cD_{1,0}$) as the vertical target map for squares (resp.~the horizontal target map for squares);
\item the degeneracy map $s^v_0\colon\cD_{0,0}\to\cD_{1,0}$ (resp.~$s^h_0\colon\cD_{0,0}\to\cD_{0,1}$) as the vertical identity map for objects (resp.~horizontal identity map for objects).
        \end{itemize}
\end{rmk}

In order to enforce the desired behaviour from the various pieces the structure, we focus on the preaugmented bisimplicial spaces which satisfies the following further conditions.

\begin{defn}
\label{sadss}
A preaugmented bisimplicial space $\cD$ is a \emph{stable augmented double Segal space} if the following hold:
\begin{enumerate}[leftmargin=*]
    \item \emph{Double Segality}: For all $k,\ell,m\geq0$, there are equivalences of spaces
    \[
(d^v_{\top},d^v_{\bot})\colon\cD_{k+\ell,m}\xrightarrow{\simeq}\cD_{k,m}\times^h_{\cD_{0,m}}\cD_{\ell,m}
\quad\text{ and }\quad
(d^h_{\top},d^h_{\bot})\colon\cD_{m,k+\ell}\xrightarrow{\simeq}\cD_{m,k}\times^h_{\cD_{m,0}}\cD_{m,\ell}\]
induced, respectively, by the commutative squares of spaces
\[
\begin{tikzcd}
\cD_{k+\ell,m}\arrow[r,"d^v_{\bot}"]\arrow[d,"d^v_{\top}" swap]&\cD_{k,m}\arrow[d,"d^v_{\top}"]\\
\cD_{\ell,m}\arrow[r,"d^v_{\bot}" swap]&\cD_{0,m}
\end{tikzcd}
\quad\text{ and }\quad
\begin{tikzcd}
\cD_{m,k+\ell}\arrow[r,"d^h_{\top}"]\arrow[d,"d^h_{\bot}" swap]&\cD_{m,k}\arrow[d,"d^h_{\bot}"]\\
\cD_{m,\ell}\arrow[r,"d^h_{\top}" swap]&\cD_{m,0}
\end{tikzcd}
\]
    \item \emph{Stability}: There are equivalences of spaces 
     \[
(d^h_1,d^v_1)\colon \cD_{1,1}\xrightarrow{\simeq}\cD_{1,0}\times^h_{\cD_{0,0}}\cD_{0,1}\quad\text{ and }\quad
(d^v_0,d^h_0)\colon \cD_{1,1}\xrightarrow{\simeq}\cD_{0,1}\times^h_{\cD_{0,0}}\cD_{1,0}
     \]
     induced, respectively, by the commutative squares of spaces 
\[
\begin{tikzcd}
\cD_{1,1}\arrow[r,"d^v_{1}"]\arrow[d,"d^h_{1}" swap]&\cD_{0,1}\arrow[d,"d^h_{1}"]\\
\cD_{1,0}\arrow[r,"d^h_{1}" swap]&\cD_{0,0}
\end{tikzcd}
\quad\text{ and }\quad
\begin{tikzcd}
\cD_{1,1}\arrow[r,"d^v_{0}"]\arrow[d,"d^h_{0}" swap]&\cD_{1,0}\arrow[d,"d^h_{0}"]\\
\cD_{0,1}\arrow[r,"d^v_{0}" swap]&\cD_{0,0}
\end{tikzcd}
\]
     \item \emph{Augmentation}: There are equivalences of spaces
     \[
d^v_1\colon \cD_{1,0}\times^h_{\cD_{0,0}}\cD_{-1}\xrightarrow{\simeq}\cD_{0,0}\quad\text{ and }\quad
d^h_0\colon \cD_{-1}\times^h_{\cD_{0,0}}\cD_{0,1}\xrightarrow{\simeq}\cD_{0,0}
     \]
     where the homotopy pullbacks are given by
\[
\begin{tikzcd}
\cD_{1,0}\times^h_{\cD_{0,0}}\cD_{-1}\arrow[rd, phantom, "\lrcorner", very near start]\arrow[r]\arrow[d]&\cD_{1,0}\arrow[d,"d^v_0"]\\
\cD_{-1}\arrow[r,"\varepsilon" swap]&\cD_{0,0}
\end{tikzcd}
\quad\text{ and }\quad
\begin{tikzcd}
\cD_{0,1}\times^h_{\cD_{0,0}}\cD_{-1}\arrow[rd, phantom, "\lrcorner", very near start]\arrow[r]\arrow[d]&\cD_{0,1}\arrow[d,"d^h_1"]\\
\cD_{-1}\arrow[r,"\varepsilon" swap]&\cD_{0,0}
\end{tikzcd}
\]
 \end{enumerate}
\end{defn}

While the structure map $\varepsilon\colon\cD_{-1}\to\cD_{0,0}$ may not be injective, if $\cD$ is a stable augmented double Segal spaces it is at least in a sense a retract up to homotopy. This justifies the idea that $\cD_{-1}$ morally realizes a subspace of $\cD_{0,0}$.

\begin{prop}
\label{AugmentatonInjective}
If $\cD$ is a stable augmented double Segal space, the augmentation map factors as a retract map followed by an equivalence of spaces, as follows:
\[
\begin{tikzcd}
\cD_{-1}\arrow[r,"\varepsilon"]\arrow[rd,hook]&\cD_{0,0}\\
&\cD_{0,1}\times_{\cD_{0,0}}^h\cD_{-1}\arrow[u,"\simeq" swap]
\end{tikzcd}
\]
\end{prop}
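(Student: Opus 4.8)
The plan is to read off the factorization directly from the Augmentation axiom of \cref{sadss}. The equivalence appearing in the statement is precisely the second augmentation equivalence, namely the map $\cD_{0,1}\times^h_{\cD_{0,0}}\cD_{-1}\xrightarrow{\simeq}\cD_{0,0}$ obtained by composing the projection to $\cD_{0,1}$ with $d^h_0$, where the homotopy pullback is formed along $d^h_1\colon\cD_{0,1}\to\cD_{0,0}$ and $\varepsilon\colon\cD_{-1}\to\cD_{0,0}$. Since this equivalence is handed to us by the definition, it remains only to construct the hooked map $i\colon\cD_{-1}\to\cD_{0,1}\times^h_{\cD_{0,0}}\cD_{-1}$ and to verify the two assertions about it: that it is a retract map, and that postcomposing it with the equivalence recovers $\varepsilon$.

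To build $i$, I would invoke the universal property of the homotopy pullback. A map into $\cD_{0,1}\times^h_{\cD_{0,0}}\cD_{-1}$ is the data of a map to $\cD_{0,1}$, a map to $\cD_{-1}$, and a homotopy making the two resulting composites into $\cD_{0,0}$ agree. I take the $\cD_{-1}$-component to be $\id_{\cD_{-1}}$ and the $\cD_{0,1}$-component to be $s^h_0\circ\varepsilon$, which under the interpretation of \cref{Interpretation} sends a distinguished object to the horizontal identity morphism on its image in $\cD_{0,0}$. The homotopy required is a witness for $d^h_1\circ s^h_0\circ\varepsilon\simeq\varepsilon$; as the simplicial identity $d^h_1 s^h_0=\id$ holds strictly, this can be taken to be the constant homotopy, so no coherence data beyond the canonical choice is needed.

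With $i$ in hand both remaining claims reduce to simplicial identities. The projection $\mathrm{pr}\colon\cD_{0,1}\times^h_{\cD_{0,0}}\cD_{-1}\to\cD_{-1}$ satisfies $\mathrm{pr}\circ i=\id_{\cD_{-1}}$ by the very choice of the $\cD_{-1}$-component, which exhibits $i$ as a section of $\mathrm{pr}$ and hence as a retract map in the sense claimed. Postcomposing $i$ with the augmentation equivalence yields $d^h_0\circ s^h_0\circ\varepsilon$, and the identity $d^h_0 s^h_0=\id$ collapses this to $\varepsilon$, so the triangle in the statement commutes.

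I expect no serious obstacle: the argument is entirely formal once the equivalence is matched with the Augmentation axiom, and the two identities $d^h_1 s^h_0=\id$ and $d^h_0 s^h_0=\id$ render every relevant homotopy strict. The only point needing minor care is that the universal property of a homotopy pullback determines $i$ only up to homotopy, so the equalities $\mathrm{pr}\circ i=\id$ and (equivalence)$\circ\, i=\varepsilon$ should be understood up to the specified constant homotopies; this is harmless, since it still yields that $\cD_{-1}$ is a homotopy retract of the pullback and thus that $\varepsilon$ is, morally, the inclusion of a subspace of $\cD_{0,0}$.
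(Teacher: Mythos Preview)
Your proof is correct and follows essentially the same strategy as the paper's: take the augmentation equivalence straight from \cref{sadss}, build the section into the homotopy pullback by pairing the degeneracy $s_0\circ\varepsilon$ with $\id_{\cD_{-1}}$, and verify both the retraction and the commutativity of the triangle via the simplicial identities $d_1s_0=\id=d_0s_0$. The only cosmetic difference is that the paper's written proof actually runs through the dual pullback $\cD_{1,0}\times^h_{\cD_{0,0}}\cD_{-1}$ using $s^v_0$ rather than the $\cD_{0,1}$ version with $s^h_0$; your choice matches the statement as displayed, and the two arguments are mirror images of one another.
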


\begin{proof}
Consider the homotopy pullback
\[
\begin{tikzcd}
\cD_{1,0}\times^h_{\cD_{0,0}}\cD_{-1}\arrow[rd, phantom, "\lrcorner", very near start]\arrow[r,"\pi_1"]\arrow[d]&\cD_{1,0}\arrow[d,"d^v_0"]\\
\cD_{-1}\arrow[r,"\varepsilon" swap]&\cD_{0,0}
\end{tikzcd}
\]
We observe that there is a commutative diagram of spaces:
\[
\begin{tikzcd}
\cD_{-1}\arrow[d,"\varepsilon" swap]\arrow[r,"s^v_0", dashed]&\cD_{1,0}\times_{\cD_{0,0}}^h\cD_{-1}\arrow[d,"\pi_1" swap]\arrow[rdd,"d_1^v\circ\pi_1", bend left=20]&\\
\cD_{0,0}
\arrow[r,"s^v_0"]\arrow[rrd,"\id" swap, bend right=20]&\cD_{1,0}\arrow[rd,"d^v_1" very near start]&\\
&&\cD_{0,0}
\end{tikzcd}
\]
Here,
\begin{itemize}[leftmargin=*]
    \item the top dashed map
$s_0^v\colon\cD_{-1}\to\cD_{0,1}\times_{\cD_{0,0}}^h\cD_{-1}$
is a retract because
it admits the retraction
$d_0^v\colon\cD_{0,1}\times_{\cD_{0,0}}^h\cD_{-1}\to\cD_{-1}$,
\item the tilted map $d_1^v\circ\pi_1\colon\cD_{1,0}\times_{\cD_{0,0}}^h\cD_{-1}\xrightarrow{\simeq}\cD_{0,0}$ is an equivalence of spaces because of the augmentation condition, and
\item the total composite is the augmentation $\varepsilon\colon\cD_{-1}\to\cD_{0,0}$.
\end{itemize}
The claim then follows. 
\end{proof}

\begin{rmk}
Relying on the intuition laid out in \cref{Interpretation}, the axioms are designed to encode, respectively, the following properties:
    \begin{enumerate}[leftmargin=*]
    \item The double Segality condition demands that the underlying bisimplicial space $\cD|_{\Delta\times\Delta}$ of $\cD$ is a double Segal space in the usual sense. This gives meaning to the fact that horizontal morphisms can be composed, vertical morphisms can be composed, and squares can be composed horizontally and vertically, all in an appropriately associative and unital sense.
 \item The stability conditions demand that each square is determined in an appropriate sense by the span contained in its boundary and also by the cospan contained in its boundary.
    Indeed, the stability conditions give equivalences of spaces
    \[(d^v_1,d^h_1)\colon\cD_{1,1}\xrightarrow{\simeq}\cD_{1,0}\times^h_{\cD_{0,0}}\cD_{0,1}\quad\text{ and }\quad(d^h_0,d^v_0)\colon\cD_{1,1}\xrightarrow{\simeq}\cD_{0,1}\times^h_{\cD_{0,0}}\cD_{1,0}\]
   which can be depicted as
\begin{center}
\begin{tikzpicture}[scale=0.7]
\draw[thick] (-2,0) rectangle (0,-2);
\draw[thick] (0.8,0) rectangle (2.5,-2);
\begin{scope}[xshift=-4.5cm, yshift=-0.5cm]
    \draw[fill] (3, -1) circle (1pt) node(a12){};
    \draw[fill] (3, 0) circle (1pt) node(a02){};
    \draw[fill] (4, 0) circle (1pt) node (a03){};
    \draw[fill] (4, -1) circle (1pt) node (a13){};
    \draw[mono] (a02)--(a03);
    \draw[mono] (a12)--(a13);
    \draw[epi] (a02)--(a12);
    \draw[epi] (a03)--(a13);
    \begin{scope}[yshift=-0.3cm]
        \draw[twoarrowlonger] (3.2,0.1)--(3.8,-0.5);
    \end{scope}
\end{scope}
\draw (0.9, -0.7) node[anchor=north east] (j5){{$\mapsto$}};
\begin{scope}[xshift=-1.8cm, yshift=-0.5cm]
    \draw[fill] (3, -1) circle (1pt) node(a12){};
    \draw[fill] (3, 0) circle (1pt) node(a02){};
    \draw[fill] (4, 0) circle (1pt) node (a03){};
    \draw[mono] (a02)--(a03);
    \draw[epi] (a02)--(a12);
\end{scope}
\draw (4.3, -0.5) node[anchor=north east] (j5){\quad and \quad};
\draw[thick] (4.5,0) rectangle (6.5,-2);
\draw[thick] (7.5,0) rectangle (9.5,-2);
\begin{scope}[xshift=2cm, yshift=-0.5cm]
    \draw[fill] (3, -1) circle (1pt) node(a12){};
    \draw[fill] (3, 0) circle (1pt) node(a02){};
    \draw[fill] (4, 0) circle (1pt) node (a03){};
    \draw[fill] (4, -1) circle (1pt) node (a13){};
    \draw[mono] (a02)--(a03);
    \draw[mono] (a12)--(a13);
    \draw[epi] (a02)--(a12);
    \draw[epi] (a03)--(a13);
    \begin{scope}[yshift=-0.3cm]
        \draw[twoarrowlonger] (3.2,0.1)--(3.8,-0.5);
    \end{scope}
\end{scope}
\draw (7.5, -0.7) node[anchor=north east] (j5){{$\mapsto$}};
\begin{scope}[xshift=5cm, yshift=-0.5cm]
    \draw[fill] (3, -1) circle (1pt) node(a12){};
    \draw[fill] (4, 0) circle (1pt) node (a03){};
    \draw[fill] (4, -1) circle (1pt) node (a13){};
    \draw[mono] (a12)--(a13);
    \draw[epi] (a03)--(a13);
\end{scope}
\end{tikzpicture}
\end{center}
\item The augmentation condition essentially demands that every object receives uniquely from the augmentation space through a horizontal morphism, and it maps uniquely to the augmentation space through a vertical morphism.
Indeed, the two augmentation conditions give equivalences of spaces
    \[d^v_1\colon\cD_{1,0}\times^h_{\cD_{0,0}}\cD_{-1}\xrightarrow{\simeq}\cD_{0,0}\quad\text{ and }\quad
    d^h_0\colon\cD_{-1}\times^h_{\cD_{0,0}}\cD_{0,1}\xrightarrow{\simeq}\cD_{0,0}\]
    which can be depicted as
   \begin{center}
\begin{tikzpicture}[scale=0.7]
\draw[thick] (-1.5,0) rectangle (-0.5,-2);
\begin{scope}[xshift=-4cm, yshift=-0.5cm]
    \draw (3, -1) node(a12){$*$};
    \draw[fill] (3, 0) circle (1pt) node(a02){};
    \draw[epi] (a02)--(a12);
\end{scope}
\draw (0.5, -0.7) node[anchor=north east] (j5){{$\mapsto$}};
\draw[thick] (0.5,0) rectangle (1.5,-2);
\begin{scope}[xshift=-2cm, yshift=-0.5cm]
    \draw[fill] (3, 0) circle (1pt) node(a02){};
\end{scope}
\draw (4, -0.5) node[anchor=north east] (j5){\quad and \quad};
\draw[thick] (4.5,-0.5) rectangle (6.5,-1.5);
\begin{scope}[xshift=2cm, yshift=-1cm]
    \draw (3, 0) node(a02){$*$};
    \draw[fill] (4, 0) circle (1pt) node (a03){};
    \draw[mono] (a02)--(a03);
\end{scope}
\draw (7.5, -0.7) node[anchor=north east] (j5){{$\mapsto$}};
\draw[thick] (7.5,-0.5) rectangle (9.5,-1.5);
\begin{scope}[xshift=6cm, yshift=-1cm]
    \draw[fill] (3, 0) circle (1pt) node(a02){}; %
\end{scope}
\end{tikzpicture}
\end{center}
\end{enumerate}
\end{rmk}

The definition is general enough to recover a variety of examples of interest, typically through a nerve construction. Let us discuss the construction from \cite[\textsection2]{BOORS4} in the case of $\cE$ being the category of Abelian groups, or more generally an \emph{Abelian category} (in the sense of \cite{MacLane}), or more generally an \emph{exact category} (in the sense of \cite{Barr}).

\begin{const}
    Given an exact category $\cE$, the preaugmented bisimplicial space $N^{\mathrm{ex}}\cE$ is defined as follows. The space $N_{-1}^{\mathrm{ex}}\cE$ is set to be the nerve of the groupoid of zero objects in $\cE$, and for $a,b\geq0$ we set $N^{\mathrm{ex}}_{a,b}\cE$ to be the nerve of the groupoid of functors $[a]\times[b]\to\cE$ such that
    \begin{itemize}[leftmargin=*]
    \item all the restrictions $[0]\times[1]\to\cE$ are admissible monomorphisms in $\cE$,
    \item all the restrictions $[1]\times[0]\to\cE$ are admissible epimorphisms in $\cE$,
    \item all the restrictions $[1]\times[1]\to\cE$ are bicartesian squares in $\cE$. 
    \end{itemize}
    The structure maps are the obvious ones.
\end{const}

\begin{prop}[{\cite[\textsection2]{BOORS4}}]
    Given an exact category $\cE$, the preaugmented bisimplicial space $N^{\mathrm{ex}}\cE$ is a stable augmented double Segal space.
\end{prop}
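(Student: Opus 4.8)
The plan is to verify the three conditions of \cref{sadss} one at a time, in each case reducing the required equivalence of spaces to an equivalence of groupoids. Indeed, every space $N^{\mathrm{ex}}_{a,b}\cE$, as well as $N^{\mathrm{ex}}_{-1}\cE$, is by construction the nerve of a groupoid, hence a $1$-truncated Kan complex, and each homotopy pullback occurring in \cref{sadss} is modeled by the iso-comma (pseudo-pullback) groupoid of the relevant restriction functors. Since the nerve functor $N\colon\gpd\to\sset$ carries iso-comma squares of groupoids to homotopy pullback squares of spaces, it suffices in each case to produce an equivalence of groupoids, which I would do by writing down an explicit inverse and invoking the exact-category facts that make that inverse well defined.

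For double Segality, I would use that $[k+\ell]$ is the pushout $[\ell]\cup_{[0]}[k]$ in $\Delta$, which identifies a functor $[k+\ell]\times[m]\to\cE$ with a pair of functors on $[\ell]\times[m]$ and $[k]\times[m]$ agreeing on the shared slice $[0]\times[m]$. The defining conditions are imposed only on the elementary restrictions of type $[0]\times[1]$, $[1]\times[0]$, and $[1]\times[1]$, each of which involves a single pair of adjacent indices and hence lies entirely within one of the two pieces; so the glued diagram is valid precisely when both pieces are. Restriction therefore gives an equivalence from the groupoid of valid $[k+\ell]\times[m]$-diagrams onto the iso-comma of the groupoids of valid $[\ell]\times[m]$- and $[k]\times[m]$-diagrams over the $[0]\times[m]$-diagrams---essential surjectivity being exactly the gluing, and full faithfulness immediate---and passing to nerves yields the first double Segal equivalence. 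The horizontal statement is dual.

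For stability, I would identify $N^{\mathrm{ex}}_{1,1}\cE$ with the groupoid of bicartesian squares whose horizontal edges are admissible monomorphisms and whose vertical edges are admissible epimorphisms. The comparison map restricts such a square to the span $C\twoheadleftarrow A\rightarrowtail B$ at its source corner, and I would invert it by taking the pushout: in an exact category this pushout exists, is bicartesian, and has an admissible monomorphism and an admissible epimorphism as its two new edges, so that a square and the span at its corner determine one another up to canonical isomorphism. The cospan version is dual, with pullback replacing pushout. For augmentation, $N^{\mathrm{ex}}_{-1}\cE$ is the groupoid of zero objects, so the relevant pullback groupoid consists of admissible epimorphisms with zero target (respectively admissible monomorphisms with zero source); since every object $X$ carries the canonical split admissible epimorphism $X\twoheadrightarrow 0$ (respectively the split admissible monomorphism $0\rightarrowtail X$), and the groupoid of zero objects is contractible, the source (respectively target) map is an equivalence onto $N^{\mathrm{ex}}_{0,0}\cE$.

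The step I expect to be the crux is the verification, inside the stability argument, that in an exact category the pushout of an admissible monomorphism along an admissible epimorphism---and dually the pullback of an admissible epimorphism along an admissible monomorphism---is bicartesian with admissible edges on all four sides, so that the span and the cospan each recover the square up to isomorphism. This is exactly where the full exact structure of \cite{Barr} is used, rather than merely a category equipped with distinguished classes of monomorphisms and epimorphisms, and it rests on the standard kernel--cokernel bookkeeping for conflations. Once these facts are in hand, everything else is the routine identification of the structure maps with the source, target, and gluing operations, together with the observation that the groupoid equivalences above descend through the nerve.
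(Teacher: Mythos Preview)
Your overall strategy—verify double Segality, stability, and augmentation separately by reducing each to an equivalence of groupoids via iso-comma pullbacks—matches the paper's, and your treatments of stability (pushout/pullback completion of a span/cospan) and augmentation (zero objects as initial/terminal for the admissible classes) are essentially the paper's own sketch.

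There is, however, a genuine gap in your double Segality argument. You assert that the defining conditions involve only \emph{adjacent} index pairs and hence lie entirely within one of the two glued pieces. But the construction of $N^{\mathrm{ex}}_{a,b}\cE$ requires that \emph{all} restrictions $[1]\times[1]\to\cE$ be bicartesian, and in $\Delta\times\Delta$ a map $[1]\times[1]\to[a]\times[b]$ may pick out non-adjacent index pairs. When you glue a valid $[\ell]\times[m]$-diagram to a valid $[k]\times[m]$-diagram along $[0]\times[m]$, a square indexed by $\{i,j\}\times\{p,q\}$ with $i$ and $j$ on opposite sides of the glue index straddles the seam, and its bicartesian-ness is not inherited from either piece: you need the pasting law for bicartesian squares to conclude. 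This is exactly the ``pullback and pushout cancellation'' that the paper's proof invokes for double Segality, and it is where the exact structure on $\cE$ enters that argument. Once you add this pasting step (or, equivalently, first argue that the adjacent-index and all-index formulations of the definition coincide, again by pasting), your gluing argument goes through; without it, essential surjectivity of the Segal map is not established. So the crux you locate in the stability argument is real, but there is a second crux—pasting of bicartesian squares—in the Segality argument that you have overlooked.
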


\begin{proof}[Proof idea] 
The fact that the Segal maps
 \[(d_2^v,d_0^v)\colon N^{\mathrm{ex}}_{2,1}\cE\to N^{\mathrm{ex}}_{1,1}\cE\times^h_{N^{\mathrm{ex}}_{0,1}\cE}N^{\mathrm{ex}}_{1,1}\cE\quad\text{and}\quad(d_2^h,d_0^h)\colon N^{\mathrm{ex}}_{1,2}\cE\to N^{\mathrm{ex}}_{1,1}\cE\times^h_{N^{\mathrm{ex}}_{1,0}\cE}N^{\mathrm{ex}}_{1,1}\cE\]
 define equivalences of spaces is a consequence of the property of pullback and pushout
 cancellation in $\cE$.
 
The fact that the stability maps
 \[(d^v_1,d^h_1)\colon N^{\mathrm{ex}}_{1,1}\cE\to N^{\mathrm{ex}}_{0,1}\cE\times^h_{N^{\mathrm{ex}}_{0,0}\cE}N^{\mathrm{ex}}
_{1,0}\cE\ \text{and}\ (d^h_0,d^v_0)\colon N^{\mathrm{ex}}_{1,1}\cE\to N^{\mathrm{ex}}_{1,0}\cE\times^h_{N^{\mathrm{ex}}_{0,0}\cE}N_{0,1}^{\mathrm{ex}}
\cE\]
define equivalences of spaces holds because
every pullback (resp.~pushout) square in $\cE$ is determined up to isomorphism by the span (resp.~cospan) contained in its boundary.

The fact that the augmentation maps
 \[d^v_1\colon N^{\mathrm{ex}}_{1,0}\cE\times^h_{N^{\mathrm{ex}}_{0,0}\cE}N^{\mathrm{ex}}_{-1}\cE\xrightarrow{\simeq}N^{\mathrm{ex}}_{0,0}\cE\quad\text{and}\quad
 d^h_0\colon N^{\mathrm{ex}}_{0,1}\cE\times^h_{N^{\mathrm{ex}}_{0,0}\cE}N^{\mathrm{ex}}_{-1}\cE\xrightarrow{\simeq}N^{\mathrm{ex}}_{0,0}\cE\]
  define equivalences of spaces is a consequence of the fact that any zero object in $\cE$ is initial in the category of admissible monomorphisms and  terminal in the category of admissible epimorphisms of $\cE$.
\end{proof}

One can mimic the same nerve construction and corresponding argument in various other contexts:

\begin{rmk}
    Stable augmented double Segal spaces arise via an appropriate nerve constructions in many situations:
    \begin{itemize}[leftmargin=*]
        \item Given a (proto-)exact ($\infty$-)category (in the sense of \cite{BarwickKtheory,DKbook}), a nerve construction in the form of a stable augmented double Segal space is provided in \cite[\textsection2,~\textsection4]{BOORS4}.
        \item Given a stable $\infty$-category (in the sense of \cite{LurieStable}), a nerve construction resulting in a stable augmented double Segal space is provided in \cite[\textsection3]{BOORS4}.
        \item Given an appropriate flavor of CGW-categories (in the sense of \cite{CZdevissage}), a nerve construction resulting in a stable augmented double Segal space should exist, and is currently the subject of further investigation by some of the workshop participants (cf.~\cite{Oberwolfach2024}).
        \item Warning: Given a Waldhausen category (in the sense of \cite{waldhausen}), the canonical choice for a nerve construction results in a preaugmented bisimplicial space which is essentially never a stable augmented double Segal space. Precisely, it satisfies the double Segality condition, but only satisfies one of the stability conditions and one of the augmentation conditions.
        \item Given a $2$-Segal space (as recalled in \cref{Section2SegalSpaces}), we will define a path construction which will result in a stable augmented double Segal space in \cref{SectionPathConstruction}.
    \end{itemize}
\end{rmk}

Again, in order to state the main result we shall consider a categorical structure for the class of stable augmented double Segal spaces. As a first attempt, one can consider an ordinary category:

\begin{rmk}
There is a category $\mathrm{saDblSegSp}$ in which:
\begin{itemize}[leftmargin=*]
    \item the set of objects is given by the stable augmented double Segal spaces;
    \item the morphisms are the maps of simplicial spaces between them; and
    \item composition and identity operators are inherited from those in the category of preaugmented bisimplicial spaces.
\end{itemize} 
\end{rmk}

We can add homotopy theory by considering equivalences of stable augmented double Segal spaces:

\begin{defn}
Let $\cD$ and $\cE$ be stable augmented double Segal spaces. A map of preaugmented bisimplicial spaces $\varphi\colon \cD\to\cE$ is an
\emph{equivalence} of stable augmented double Segal spaces if for all $a,b\geq0$ it defines an equivalence of spaces $\varphi_{a,b}\colon \cD_{a,b}\xrightarrow{\simeq} \cE_{a,b}$, as well as an equivalence of spaces $\varphi_{-1}\colon \cD_{-1}\xrightarrow{\simeq} \cE_{-1}$.
\end{defn}

With this notion of equivalence between stable augmented double Segal spaces, one can consider the set of equivalence classes of stable augmented double Segal spaces:

\begin{rmk}
Consider the relation on the class of stable augmented double Segal spaces given by declaring that $\cD$ is related to $\cE$ if there exists an equivalence $\varphi\colon \cD\xrightarrow{\simeq}\cE$. It follows from the definitions that this relation is reflexive and transitive, though without imposing further technical conditions it is not a priori symmetric.
Nevertheless, one can consider the set $\mathrm{saDblSegSp}/_\simeq$ of stable augmented double Segal spaces up to the equivalence relation $\simeq$ it generates.
\end{rmk}

By simplicially localizing the category of stable augmented double Segal spaces at the class of equivalences of stable augmented double Segal spaces, we obtain a simplicial category in which all equivalences of stable augmented double Segal spaces are universally inverted in an appropriate weak sense:

\begin{const}
\label{saDblSegSp}
The simplicial localization of the category of stable augmented double Segal spaces and equivalences stable augmented double Segal spaces
is a simplicial category $\bm{\mathrm{saDblSegSp}}$ in which:
    \begin{itemize}[leftmargin=*]
        \item The set of objects is given by the collection of the stable augmented double Segal spaces;
        \item Given stable augmented double Segal spaces $\cD$ and $\cE$, the mapping space of maps from a $\cD$ to $\cE$ is given by the space of \emph{hammocks} from $\cD$ to $\cE$.
        \item Composition and identity operators are given by concatenating hammocks, and taking the trivial hammock, respectively.
    \end{itemize}
\end{const}

\section{The path construction}

\label{SectionPathConstruction}

In this section we introduce the \emph{path construction} $\cP$ of a $2$-Segal space from \cite{BOORS3}, and show it is a  stable augmented double Segal space.

\begin{const}
\label{PathSADSS}
Given a simplicial space $X$, we define the \emph{path construction} of $X$, which is a preaugmented bisimplicial space $\cP X$. It consists of the spaces
\[\cP_{-1} X\coloneqq X_0\quad\text{ and }\quad \cP_{a,b} X\coloneqq X_{a+1+b}\quad\text{ for }a,b\geq0,\]
together with the following structure maps:
\begin{itemize}[leftmargin=*]
        \item the vertical $i$-th face map $d^v_i\colon\cP_{n,\ell}X\to\cP_{n-1,\ell}X$ is given by $d_i\colon X_{n+1+\ell}\to X_{n+\ell}$ for $\ell\geq0$ and $n\geq0$,
         \item the horizontal $i$-th face map $d^h_i\colon\cP_{\ell,n}X\to\cP_{\ell,n-1}X$ is given by $d_{i+1+\ell}\colon X_{\ell+1+n}\to X_{\ell+n}$ for $\ell\geq0$ and $n\geq0$,
\item the vertical $i$-th degeneracy map $s^v_i\colon\cP_{n,\ell}X\to\cP_{n+1,\ell}X$ is given by $s_i\colon X_{n+1+\ell}\to X_{n+2+\ell}$ for $\ell\geq0$ and $n>0$,
         \item the horizontal $i$-th degeneracy map $s^h_i\colon\cP_{\ell,n}X\to\cP_{\ell,n+1}X$ is given by $s_{i+1+\ell}\colon X_{\ell+1+n}\to X_{\ell+2+n}$ for $\ell\geq0$ and $n>0$,
         \item the augmentation map $\varepsilon\colon\cP_{-1}X\to\cP_{0,0}X$ is given by $s_0\colon X_0\to X_1$.
\end{itemize}

To provide more intuition, let us spell out some of the basic data of $\cP X$:

\begin{rmk}
Let $X$ be a simplicial space.
    \begin{itemize}[leftmargin=*]
        \item The space of objects $\cP_{0,0} X$ is given by the space of $1$-simplices of $X$.
        \item The augmentation space $\cP_{-1} X$ is given by the $0$-simplices of $X$, regarded as degenerate $1$-simplices.
        \item The spaces of vertical and horizontal morphisms $\cP_{1,0} X$ and $\cP_{0,1} X$ are both given by the space of $2$-simplices of $X$, but the structure maps act differently. Indeed, a $2$-simplex $\sigma$ defines at the same time a vertical morphism and a horizontal morphism which can be depicted as follows:
        \begin{center}
    \begin{tikzpicture}[scale=1.5]
        \draw[thick] (3.1,0.2) rectangle (3.9,-1.2);
        \begin{scope}[xshift=3.5cm, yshift=-0.5cm]
            \draw (0, -0.5)  node(a12){$d_0\sigma$};
            \draw (0, 0.5)  node(a02){$d_1\sigma$};
            \draw[epi] (a02)--(a12);
        \end{scope}
        \draw (6.2, -0.25) node[anchor=north east] (j5){\quad and \quad};
        \draw[thick] (7.5,-0.2) rectangle (9.5,-0.8);
        \begin{scope}[xshift=5cm, yshift=-0.5cm]
            \draw (3, 0)  node(a02){$d_2\sigma$};
            \draw (4, 0) node (a03){$d_1\sigma$};
            \draw[mono] (a02)--(a03);
        \end{scope}
    \end{tikzpicture}
\end{center}
\item The space of squares $\cP_{1,1} X$, and the spaces of pairs of composable and horizontal morphisms $\cP_{0,2} X$ and $\cP_{2,0} X$ are all given by the space of $3$-simplices of $X$, but the structure maps act differently. Indeed, a $3$-simplex $\tau$ defines at the same time a pair of composable vertical morphisms, a square, and a pair of composable horizontal morphisms which can be depicted as follows:
 \begin{center}
    \begin{tikzpicture}[scale=1.8]
     \draw[thick] (1.3,0.3) rectangle (2.1,-2.3);
    \begin{scope}[xshift=-1.3cm, yshift=0cm]
        \draw (3, 0)  node(a02){$d_{\{0,3\}}\tau$};
        \draw (3, -1)  node(a12){$d_{\{1,3\}}\tau$};
        \draw (3,-2)  node (a22){$d_{\{2,3\}}\tau$};
        \draw[epi] (a12)--(a22);
        \draw[epi] (a02)--(a12);
    \end{scope}
    \draw (2.9, -0.8) node[anchor=north east] (j5){\quad and \quad};
        \draw[thick] (3.1,-0.2) rectangle (5,-1.8);
        \begin{scope}[xshift=0.55cm, yshift=-0.5cm]
            \draw (3, -1)  node(a12){$d_{\{1,2\}}\tau$};
            \draw (3, 0)  node(a02){$d_{\{0,2\}}\tau$};
            \draw (4, 0)  node (a03){$d_{\{0,3\}}\tau$};
            \draw (4, -1) node (a13){$d_{\{1,3\}}\tau$};
            \draw[mono] (a02)--(a03);
            \draw[mono] (a12)--(a13);
            \draw[epi] (a02)--(a12);
            \draw[epi] (a03)--(a13);
            \begin{scope}[yshift=-0.3cm]
                \draw[twoarrowlonger] (3.2,0.1)--(3.8,-0.5);
            \end{scope}
        \end{scope}
        \draw (5.8, -0.8) node[anchor=north east] (j5){\quad and \quad};   
        \draw[thick] (5.9,-0.7) rectangle (8.8,-1.3);
     \begin{scope}[xshift=5.35cm, yshift=-1cm]
        \draw (1,0) node(a00){$d_{\{0,1\}}\tau$};
        \draw (2,0)  node(a01){$d_{\{0,2\}}\tau$};
        \draw (3, 0)  node(a02){$d_{\{0,3\}}\tau$};
        \draw[mono] (a00)--(a01);
        \draw[mono] (a01)--(a02);
    \end{scope}
    \end{tikzpicture}
\end{center}
    \end{itemize}
\end{rmk}

\end{const}
\begin{prop}
    If $X$ is a $2$-Segal space, the preaugmented bisimplicial space $\cP X$ is a stable augmented double Segal space.
\end{prop}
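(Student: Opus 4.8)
The plan is to verify each of the three defining conditions of a stable augmented double Segal space for $\cP X$ by translating them, via the definitions $\cP_{a,b}X = X_{a+1+b}$ and $\cP_{-1}X = X_0$, into statements about the simplicial space $X$, and then to recognize each translated statement as a consequence of the $2$-Segal condition (\cref{2SegalSpace}) or the unitality of $X$ recorded in \cref{unital}. The key observation throughout is bookkeeping: the path construction reindexes simplices, so horizontal and vertical face maps of $\cP X$ become specific face maps of $X$, and the various homotopy pullbacks appearing in \cref{sadss} become homotopy pullbacks among the $X_n$.

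First I would treat \textbf{double Segality}. Under the identification $\cP_{k+\ell,m}X = X_{k+\ell+1+m}$, the vertical iterated top and bottom face maps $d^v_\top$ and $d^v_\bot$ are precisely the simplicial maps onto the sub-simplices spanned by the top and bottom vertex-intervals. The required equivalence
\[
\cP_{k+\ell,m}X \xrightarrow{\simeq} \cP_{k,m}X \times^h_{\cP_{0,m}X} \cP_{\ell,m}X
\]
thus becomes a Segal-type decomposition of $X_{k+\ell+1+m}$ along an intermediate vertex, which follows by iterating the two $2$-Segal equivalences of \cref{2SegalSpace}. The same argument applied to horizontal faces, which are reindexed face maps $d_{i+1+\ell}$ of $X$, handles the horizontal double Segality; the symmetry of the two decompositions in \cref{2SegalSpace} is exactly what makes both directions work.

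Next I would check \textbf{stability}. Here $\cP_{1,1}X = X_3$, while $\cP_{1,0}X = \cP_{0,1}X = X_2$ and $\cP_{0,0}X = X_1$, so the first stability square asks that $X_3$ be the homotopy pullback of two copies of $X_2$ over $X_1$, where the legs are the face maps picking out the appropriate $2$-simplices spanned by $\{0,1,2\}$, $\{0,2,3\}$ and their shared edge $\{0,2\}$. This is precisely the $n=2$ case of the first $2$-Segal equivalence in \cref{2SegalSpace}; the second stability condition is the $n=2$ case of the second $2$-Segal equivalence. Finally, for \textbf{augmentation}, the spaces involved are $\cP_{1,0}X = X_2$, $\cP_{-1}X = X_0$ (included via $s_0 \colon X_0 \to X_1 = \cP_{0,0}X$), and $\cP_{0,0}X = X_1$, so the augmentation equivalences translate directly into the unitality equivalences $X_2 \times^h_{X_1} X_0 \xrightarrow{\simeq} X_1$ recorded in \cref{unital}, after matching the relevant face and degeneracy maps.

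The \textbf{main obstacle} I anticipate is not conceptual but bookkeeping: one must carefully match the reindexed face maps of $\cP X$ (for instance $d^h_i = d_{i+1+\ell}$) against the specific vertex-subset maps appearing in \cref{2SegalSpace} and \cref{unital}, and confirm that the commutative squares inducing the pullbacks in \cref{sadss} are literally identified with those inducing the $2$-Segal and unitality equivalences. Once the indices are aligned correctly, each condition reduces to an already-established equivalence, so the proof is a verification rather than a construction. I would organize the write-up by stating the three translations explicitly and then citing \cref{2SegalSpace} and \cref{unital} for each.
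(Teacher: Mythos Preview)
Your proposal is correct and follows essentially the same approach as the paper: translate each condition in \cref{sadss} for $\cP X$ back through the reindexing $\cP_{a,b}X=X_{a+1+b}$, then recognize the resulting squares as instances of the $2$-Segal equivalences from \cref{2SegalSpace} (for double Segality and stability) and of unitality from \cref{unital} (for augmentation). The paper only displays the smallest instances explicitly and defers the general double Segality and the dual conditions to ``similar reasoning,'' so your outline is in fact slightly more complete; your anticipated bookkeeping obstacle is exactly the content of the verification, and the paper's parenthetical ``after thinking about it a little bit'' for augmentation corresponds to the small extra step of inverting the unitality equivalence you allude to.
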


\begin{proof}
Since $X$ is a $2$-Segal space, by \cref{2SegalSpace,unital} for all $n\geq2$ there are homotopy pullback squares of spaces
\[
\begin{tikzcd}
X_{3}\arrow[rd, phantom, "\lrcorner", very near start]\arrow[r,"d_0"]\arrow[d,"d_{2}" swap]&X_2\arrow[d,"d_1"]\\
X_{1}\arrow[r,"d_0" swap]&X_1
\end{tikzcd}
\quad\text{ and }\quad
\begin{tikzcd}
X_3\arrow[rd, phantom, "\lrcorner", very near start]\arrow[r,"d_3" ]\arrow[d,"d_1" swap]&X_2\arrow[d,"d_1"]\\
X_2\arrow[r,"d_2" swap]&X_1
\end{tikzcd}
\quad\text{ and }\quad
\begin{tikzcd}
X_1\arrow[rd, phantom, "\lrcorner", very near start]\arrow[r,"s_0" ]\arrow[d,"d_1" swap]&X_2\arrow[d,"d_2"]\\
X_0\arrow[r,"s_0" swap]&X_1
\end{tikzcd}
\]
By definition of $\cP X$, these can be rewritten as homotopy pullback squares of spaces
\[
\begin{tikzcd}
\cP_{2,0} X\arrow[rd, phantom, "\lrcorner", very near start]\arrow[r,"d^v_0" ]\arrow[d,"d^v_2"swap]&\cP_{1,0} X\arrow[d,"d^v_1" ]\\
\cP_{1,0} X\arrow[r,"d^v_0" swap]&\cP_{0,0} X
\end{tikzcd}
\text{ and }
\begin{tikzcd}
\cP_{1,1} X\arrow[rd, phantom, "\lrcorner", very near start]\arrow[r,"d^h_1"]\arrow[d,"d^v_1" swap]&\cP_{1,0} X\arrow[d,"d^v_1"]\\
\cP_{0,1} X\arrow[r,"d^h_1" swap]&\cP_{0,0} X
\end{tikzcd}
\text{ and }
\begin{tikzcd}
\cP_{0,0} X\arrow[rd, phantom, "\lrcorner", very near start]\arrow[r,""]\arrow[d,"" swap]&\cP_{0,1} X\arrow[d,"d^h_1"]\\
\cP_{-1} X\arrow[r,"\varepsilon" swap]&\cP_{0,0} X
\end{tikzcd}
\]
These imply, respectively, the first smallest instances of the conditions of double Segality, stability and (after thinking about it a little bit) augmentation
from \cref{sadss} for $\cP X$. The dual conditions and the general property of double Segality can be proven with a similar reasoning.
\end{proof}

We can unpack the path construction $\cP\Delta[n]$ of $\Delta[n]$ for $n\leq4$, and let the reader imagine that of $\Delta[n]$ for general $n$.

\begin{ex}
Following the conventions established in \cref{Interpretation}, the path constructions
\[\cP\Delta[0]\quad,\quad\cP\Delta[1]\quad,\quad\cP\Delta[2]\quad,\quad\cP\Delta[3]\quad,\quad\cP\Delta[4]\]
are generated by the following data, respectively:
\begin{center}
\begin{tikzpicture}[scale=0.7]
 \draw[thick] (-8,-0.5) rectangle (-7,-1.5);
        \begin{scope}[xshift=-10cm, yshift=-1.5cm]
            \draw (2.5, 0.5) circle (1pt) node(a12){$*$};
        \end{scope}
    \draw[thick] (-6.0,0.0) rectangle (-4.0,-2.0);
    \begin{scope}[xshift=-7.5cm, yshift=-0.5cm]
        \draw[fill] (2,0) circle (1pt) node(a01){$*$};
        \draw (3, -1) circle (1pt) node(a12){$*$};
        \draw[fill] (3, 0) circle (1pt) node(a02){};
        \draw[mono] (a01)--(a02);
        \draw[epi] (a02)--(a12);
    \end{scope}
    \draw[thick] (-3,0.5) rectangle (0,-2.5);
    \begin{scope}[xshift=-3.5cm, yshift=0cm]
        \draw (1,0) node(a00){$*$};
        \draw[fill] (2,0) circle (1pt) node(a01){};
        \draw (2,-1) node(a11) {$*$};
        \draw[fill] (3, -1) circle (1pt) node(a12){};
        \draw[fill] (3, 0) circle (1pt) node(a02){};
        \draw (3,-2) node (a22){$*$};
        \draw[mono] (a00)--(a01);
        \draw[mono] (a11)--(a12);
        \draw[mono] (a01)--(a02);
        \draw[epi] (a01)--(a11);
        \draw[epi] (a12)--(a22);
        \draw[epi] (a02)--(a12);
        \begin{scope}[yshift=-0.3cm]
            \draw[twoarrowlonger] (2.2,0.1)--(2.8,-0.5);
        \end{scope}
    \end{scope}
    \draw[thick] (1,1) rectangle (5,-3);
    \begin{scope}[xshift=0.5cm, yshift=0.5cm]
        \draw (1,0) node(a00){$*$};
        \draw[fill] (2,0) circle (1pt) node(a01){};
        \draw (2,-1) node(a11) {$*$};
        \draw[fill] (3, -1) circle (1pt) node(a12){};
        \draw[fill] (3, 0) circle (1pt) node(a02){};
        \draw (3,-2) node (a22){$*$};
        \draw[fill] (4, 0) circle (1pt) node (a03){};
        \draw[fill] (4, -1) circle (1pt) node (a13){};
        \draw[fill](4, -2) circle (1pt) node (a23){};
        \draw (4, -3) node (a33){$*$};
        \draw[mono] (a00)--(a01);
        \draw[mono] (a11)--(a12);
        \draw[mono] (a01)--(a02);
        \draw[mono] (a02)--(a03);
        \draw[mono] (a12)--(a13);
        \draw[mono] (a22)--(a23);
        \draw[epi] (a01)--(a11);
        \draw[epi] (a12)--(a22);
        \draw[epi] (a02)--(a12);
        \draw[epi] (a03)--(a13);
        \draw[epi] (a13)--(a23);
        \draw[epi] (a23)--(a33);
        \begin{scope}[yshift=-0.3cm]
            \draw[twoarrowlonger] (2.2,0.1)--(2.8,-0.5);
            \draw[twoarrowlonger] (3.2,0.1)--(3.8,-0.5);
            \draw[twoarrowlonger] (3.2,-0.9)--(3.8,-1.5);
        \end{scope}
    \end{scope}
    \draw[thick] (6,1.5) rectangle (11,-3.5);
    \begin{scope}[xshift=5.5cm,yshift=1cm]
        \draw (1,0) node(a00){$*$};
        \draw[fill] (2,0) circle (1pt) node(a01){};
        \draw (2,-1) node(a11) {$*$};
        \draw[fill] (3, -1) circle (1pt) node(a12){};
        \draw[fill] (3, 0) circle (1pt) node(a02){};
        \draw (3,-2) node (a22){$*$};
        \draw[fill] (4, 0) circle (1pt) node (a03){};
        \draw[fill] (4, -1) circle (1pt) node (a13){};
        \draw[fill](4, -2) circle (1pt) node (a23){};
        \draw (4, -3) node (a33){$*$};
        \draw[fill] (5, 0) circle (1pt) node (a04){};
        \draw[fill] (5, -1)circle (1pt) node (a14){};
        \draw[fill] (5, -2)circle (1pt) node (a24){};
        \draw[fill] (5, -3)circle(1pt) node (a34){};
        \draw (5, -4) node (a44){$*$};
        \draw[mono] (a00)--(a01);
        \draw[mono] (a11)--(a12);
        \draw[mono] (a01)--(a02);
        \draw[mono] (a02)--(a03);
        \draw[mono] (a12)--(a13);
        \draw[mono] (a22)--(a23);
        \draw[mono] (a03)--(a04);
        \draw[mono] (a13)--(a14);
        \draw[mono] (a23)--(a24);
        \draw[mono] (a33)--(a34);
        \draw[epi] (a01)--(a11);
        \draw[epi] (a12)--(a22);
        \draw[epi] (a02)--(a12);
        \draw[epi] (a03)--(a13);
        \draw[epi] (a13)--(a23);
        \draw[epi] (a23)--(a33);
        \draw[epi] (a04)--(a14);
        \draw[epi] (a14)--(a24);
        \draw[epi] (a24)--(a34);
        \draw[epi] (a34)--(a44);
        \begin{scope}[yshift=-0.3cm]
            \draw[twoarrowlonger] (2.2,0.1)--(2.8,-0.5);
            \draw[twoarrowlonger] (3.2,0.1)--(3.8,-0.5);
            \draw[twoarrowlonger] (3.2,-0.9)--(3.8,-1.5);
            \draw[twoarrowlonger] (4.2,0.1)--(4.8,-0.5);
            \draw[twoarrowlonger] (4.2,-1.9)--(4.8,-2.5);
            \draw[twoarrowlonger] (4.2,-0.9)--(4.8,-1.5);
        \end{scope}
    \end{scope}
   

\end{tikzpicture}
\end{center}
\end{ex}

\begin{rmk}
The path construction induces a functor
\[
\cP\colon\mathrm{2SegSp}\to\mathrm{saDblSegSp}.
\]
\end{rmk}

This functor is compatible with the homotopy theory of the objects involved:

\begin{prop}
\label{Phomotopical}
If $f\colon X\xrightarrow{\simeq} Y$ is an equivalence of $2$-Segal spaces, then
\[\cP f\colon\cP X\xrightarrow{\simeq}\cP Y\]
is an equivalence of stable augmented double Segal spaces.
\end{prop}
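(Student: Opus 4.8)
The plan is to reduce the claim to the definition of equivalence of stable augmented double Segal spaces, which is a purely levelwise condition. First I would recall from \cref{PathSADSS} that $\cP f\colon\cP X\to\cP Y$ is the map of preaugmented bisimplicial spaces obtained by applying $f$ in each relevant simplicial degree. Concretely, since $\cP_{a,b}X=X_{a+1+b}$ and $\cP_{-1}X=X_0$, the components of $\cP f$ are
\[
(\cP f)_{a,b}=f_{a+1+b}\colon X_{a+1+b}\to Y_{a+1+b}\quad\text{and}\quad(\cP f)_{-1}=f_0\colon X_0\to Y_0.
\]
That these components assemble into a genuine map of preaugmented bisimplicial spaces follows from the naturality of $f$ with respect to the simplicial structure maps, together with the fact that every vertical and horizontal face and degeneracy map of $\cP$, as well as the augmentation $\varepsilon$, is by definition a reindexed simplicial operator of $X$ (respectively $Y$). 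I would record this compatibility by checking it on the generating structure maps.

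Next I would invoke the hypothesis that $f$ is an equivalence of $2$-Segal spaces, which by definition means that $f_n\colon X_n\xrightarrow{\simeq} Y_n$ is an equivalence of spaces for every $n\geq0$. It follows immediately that each component $(\cP f)_{a,b}=f_{a+1+b}$ (for $a,b\geq0$) and $(\cP f)_{-1}=f_0$ is an equivalence of spaces.

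Finally, recalling that a map of stable augmented double Segal spaces is declared an equivalence precisely when it induces an equivalence of spaces in every bidegree $(a,b)$ with $a,b\geq0$ and in augmentation degree $-1$, I would conclude that $\cP f$ is an equivalence of stable augmented double Segal spaces. I do not expect any genuine obstacle here: the Segal, stability, and augmentation conditions play no role whatsoever in the argument, and the only point requiring care is the essentially bookkeeping verification that $\cP f$ respects all structure maps, which is forced by the functoriality of the reindexing defining $\cP$.
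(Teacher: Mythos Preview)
Your proposal is correct and follows essentially the same approach as the paper: both identify the components $(\cP f)_{a,b}=f_{a+1+b}$ and $(\cP f)_{-1}=f_0$, invoke that each $f_n$ is an equivalence of spaces, and conclude via the levelwise definition of equivalence of stable augmented double Segal spaces. Your version is slightly more explicit about the functoriality bookkeeping, but the substance of the argument is identical.
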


\begin{proof}
Since $f\colon X\xrightarrow{\simeq}Y$ is an equivalence of stable augmented double Segal spaces, for all $a,b\geq0$ there are equivalences of spaces
\[f_{a+1+b}\colon X_{a+1+b}\xrightarrow{\simeq} Y_{a+1+b}\quad\text{ and }\quad
f_{0}\colon X_{0}\xrightarrow{\simeq} Y_{0}.\]
These by construction are equivalences of spaces
\[\cP_{a,b}f\colon\cP_{a,b} X\xrightarrow{\simeq}\cP_{a,b} Y\quad\text{ and }\quad
\cP_{-1} f\colon\cP X_{-1}\xrightarrow{\simeq}\cP_{-1} Y.\]
So $\cP f\colon\cP X\xrightarrow{\simeq}\cP Y$ is an equivalence of stable augmented double Segal spaces, as desired.
\end{proof}

Thanks to \cref{Phomotopical}, we obtain that the path construction descends to equivalence classes:

\begin{rmk}
 The path construction defines a function
        \[
        \cP\colon
  \mathrm{2SegSp}/_\simeq\ \to\ \mathrm{saDblSegSp}/_\simeq.
  \]
\end{rmk}

Moreover, it follows from \cite[\textsection1.5]{BarwickKan} that the path construction also induces a functor at the level of simplicial localizations:

\begin{const}
The path construction induces a simplicial functor
\[
{\bm{\cP}}\colon\bm{\mathrm{2SegSp}}\to\bm{\mathrm{saDblSegSp}}.
\]
\end{const}

\section{The $S_\bullet$-construction}

In this section we introduce the \emph{$S_\bullet$-construction} of a stable augmented double Segal space from \cite{BOORS3}, and show it is a $2$-Segal space
under mild technical assumptions.

In order to define the $S_\bullet$-construction, we should introduce some further structure for the category of preaugmented bisimplicial spaces.

\begin{notn}
Given a preaugmented bisimplicial space $\cA$ and a simplicial space $S$, the \emph{tensor} $\cA\boxtimes S$ of $\cA$ with $S$ is the preaugmented
bisimplicial space which consists of the spaces
\[
(\cA\boxtimes S)_{-1}\coloneqq\cA_{-1}\times S
\quad\text{ and }
(\cA\boxtimes S)_{a,b}\coloneqq\cA_{a,b}\times S\quad\text{for $a,b\geq0$}.\]
The structures maps are the obvious ones.
\end{notn}

\begin{notn}
\label{MappingSpaces}
Given preaugmented bisimplicial spaces $\cA$ and $\cB$, the \emph{mapping space} $\Map(\cA,\cB)$ from $\cA$ to $\cB$ is the space given, for $\ell\geq0$, by the set
\[
\Map_\ell(\cA,\cB)\coloneqq\Hom_{\mathrm{saDblSegSp}}(\cA\boxtimes\Delta[\ell],\cB).\]
The structures maps are the obvious ones.
\end{notn}

We can now define the $S_\bullet$-construction of a preaugmented bisimplicial space. We regard the standard simplex $\Delta[n]$ as a discrete simplicial space. 

\begin{const}
    Given a preaugmented bisimplicial space $\cD$, we define the \emph{$S_\bullet$-construction} of $X$, which is a simplicial space $S_{\bullet}\cD$. It consists, for $n\geq0$,
    of the space
    \[  S_n\cD:=\Map(\cP\Delta[n],\cD).
    \]
    The structure maps are the obvious ones, induced by the cosimplicial structure of $n\mapsto\Delta[n]\mapsto\cP\Delta[n]$.
\end{const}

We'd like to say that the $S_\bullet$-construction of a stable augmented double Segal space is a $2$-Segal space, but this fails in such generality.
The problem can be resolved by incorporating a technical condition, which we acknowledge for the sake of accuracy; however, we encourage readers who are not acquainted with it to disregard this detail in order to focus on the primary narrative.

We refer the reader to \cite[\textsection6.11]{Hirschhorn} for the notion of an \emph{injectively fibrant} preaugmented bisimplicial space (i.e., a fibrant object in the injective model structure of $\Sigma$-spaces.
What one needs to know is just that for every preaugmented simplicial space $\cD$ there exists an injectively fibrant preaugmented bisimplicial space $\widetilde{\cD}$ and a natural map $\cD\to\widetilde{\cD}$ which induces an equivalence of spaces $\cD_{-1}\to\widetilde{\cD}_{-1}$ and $\cD_{a,b}\to\widetilde{\cD}_{a,b}$ for $a,b\geq0$. Moreover, if $\cD$ is a stable augmented bisimplicial space, so is $\widetilde\cD$.

\begin{prop}
\label{Sdot2Segal}
    If $\cD$ is an injectively fibrant stable augmented double Segal space, the simplicial space $S_{\bullet}\cD$ is a $2$-Segal space.
\end{prop}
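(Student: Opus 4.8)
The plan is to rephrase the $2$-Segal condition of \cref{2SegalSpace} for $S_\bullet\cD$ as a homotopy-pullback statement about mapping spaces out of path constructions, and then to extract that statement from the three defining conditions of \cref{sadss}. By construction $S_m\cD=\Map(\cP\Delta[m],\cD)$, so the first $2$-Segal map at level $n+1$ is precisely the map
\[
\Map(\cP\Delta[n+1],\cD)\longrightarrow\Map(\cP\Delta[\{0,1,2\}],\cD)\times^h_{\Map(\cP\Delta[\{0,2\}],\cD)}\Map(\cP\Delta[\{0,2,\dots,n+1\}],\cD),
\]
induced by the inclusions of the faces $\{0,1,2\}$, $\{0,2\}$ and $\{0,2,\dots,n+1\}$ into $[n+1]$; the second $2$-Segal map is treated symmetrically.

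First I would record that the path construction is restriction $\cP=j^*$ along the ordinal-sum functor $j\colon\Sigma\to\Delta$ given by $(a,b)\mapsto[a+1+b]$ and $[-1]\mapsto[0]$; as a restriction functor it preserves all colimits and all monomorphisms. Writing $K_n\coloneqq\Delta[\{0,1,2\}]\cup_{\Delta[\{0,2\}]}\Delta[\{0,2,\dots,n+1\}]$ for the subcomplex of $\Delta[n+1]$ obtained by gluing a triangle to an $n$-simplex along the edge $\{0,2\}$, the defining span consists of monomorphisms, so $K_n$ is a homotopy pushout; applying $\cP$ preserves both the pushout and the cofibrancy of its legs. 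Since $\cD$ is injectively fibrant, the enriched mapping space $\Map(-,\cD)$ of \cref{MappingSpaces} is a (right-derived) homotopical functor, and therefore sends this homotopy pushout to a homotopy pullback. This identifies the target of the displayed map with $\Map(\cP K_n,\cD)$, so that the $2$-Segal condition becomes equivalent to the assertion that the inclusion $\cP K_n\hookrightarrow\cP\Delta[n+1]$ induces an equivalence $\Map(\cP\Delta[n+1],\cD)\xrightarrow{\simeq}\Map(\cP K_n,\cD)$.

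It then remains to prove this last equivalence, which no longer mentions the $2$-Segal condition and is a statement purely about the interaction of $\cP$ with \cref{sadss}. The nondegenerate simplices of $\Delta[n+1]$ missing from $K_n$ are exactly those containing the vertex $1$ together with some vertex $\geq3$, and I would attach them one at a time along a filtration $K_n=L_0\subset L_1\subset\dots\subset L_N=\Delta[n+1]$, each step a pushout along a suitable horn or boundary inclusion. The claim is that after applying $\Map(\cP(-),\cD)$ every attaching map becomes an equivalence, because the newly attached cell is forced by its boundary through one of the double Segality, stability, or augmentation equivalences of \cref{sadss}. The smallest case $n=2$ recovers exactly the three homotopy pullback squares written down in the proof that $\cP X$ is a stable augmented double Segal space, now read in the opposite direction, and the inductive step peels off a single triangle using double Segality; the dual $2$-Segal maps follow from the mirror-image filtration.

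The main obstacle is the combinatorial organization of this filtration: one must order the missing simplices so that each attaching map, after passing to $\Map(\cP(-),\cD)$, matches one of the structural equivalences of $\cD$ on the nose rather than only up to a zig-zag, and one must confirm that injective fibrancy legitimately computes every intervening homotopy limit --- no cofibrancy obstruction arises, since every preaugmented bisimplicial space is cofibrant in the injective model structure. Getting this matching exactly right is where the real work lies, and where I would rely most heavily on the explicit description of $\cP\Delta[m]$ as a triangular grid from the preceding example.
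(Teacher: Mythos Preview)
Your proposal is correct and follows essentially the same strategy as the paper: both reduce the $2$-Segal condition to showing that the inclusion of $\cP(\Delta[\{0,1,2\}]\cup_{\Delta[\{0,2\}]}\Delta[\{0,2,\dots,n+1\}])$ into $\cP\Delta[n+1]$ induces an equivalence on $\Map(-,\cD)$, and both establish this via a filtration whose steps are controlled by the double Segality, stability, and augmentation conditions (the paper illustrates this pictorially for $n=2$ in \cref{Lemma2Segal} and defers the general filtration to \cite[Lemma~5.12]{BOORS3}). Your framing via $\cP=j^*$ and the conversion of homotopy pushouts to homotopy pullbacks under $\Map(-,\cD)$ is a cleaner way to set up the reduction than the paper's more ad hoc identification of the target, but the substantive combinatorial work---which you correctly flag as the real obstacle---is identical.
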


Proving the proposition amounts to analyzing the $2$-Segal maps of $S_\bullet\cD$:

\begin{lem}
\label{Lemma2Segal}
Let $\cD$ be an injectively fibrant
stable augmented double Segal space.
For all $n\geq0$ the second canonical map from \cref{2SegalSpace} is an equivalence of spaces
\[S_{n+1}\cD\xrightarrow{\simeq}S_n\cD\times^h_{S_1\cD}S_2\cD.\]
For instance, when $n=2$ it can be depicted as\footnote{In this specific instance, the graphical calculus fails to be clear on whether composites shall be included. However, in presence of the property double Segality for $\cD$, this ambiguity does not undermine the argument or the underlying intuition.}
\begin{center}
    \begin{tikzpicture}[scale=0.7]
    \draw[thick] (6.5,0.5) rectangle (10.5,-3.5);
    \draw[thick] (0.5,0.5) rectangle (4.5,-3.5);
\begin{scope}[xshift=0cm, yshift=0cm]
     \draw (1,0) node(a00){$*$};
\draw[fill] (2,0) circle (1pt) node(a01){};
\draw (2,-1) node(a11) {$*$};
\draw[fill] (3, -1) circle (1pt) node(a12){};
\draw[fill] (3, 0) circle (1pt) node(a02){};
\draw (3,-2) node (a22){$*$};
\draw[fill] (4, 0) circle (1pt) node (a03){};
\draw[fill] (4, -1) circle (1pt) node (a13){};
\draw[fill](4, -2) circle (1pt) node (a23){};
\draw (4, -3) node (a33){$*$};
\draw[mono] (a00)--(a01);
\draw[mono] (a11)--(a12);
\draw[mono] (a01)--(a02);
\draw[mono] (a02)--(a03);
\draw[mono] (a12)--(a13);
\draw[mono] (a22)--(a23);
\draw[epi] (a01)--(a11);
\draw[epi] (a12)--(a22);
\draw[epi] (a02)--(a12);
\draw[epi] (a03)--(a13);
\draw[epi] (a13)--(a23);
\draw[epi] (a23)--(a33);
\begin{scope}[yshift=-0.3cm]
  \draw[twoarrowlonger] (2.2,0.1)--(2.8,-0.5);
  \draw[twoarrowlonger] (3.2,0.1)--(3.8,-0.5);
  \draw[twoarrowlonger] (3.2,-0.9)--(3.8,-1.5);
\end{scope}
\end{scope}

\draw (6, -1.3) node[anchor=north east] (j5){$\mapsto$};
\begin{scope}[xshift=6cm, yshift=0cm]
     \draw (1,0) node(a00){$*$};
\draw[fill] (2,0) circle (1pt) node(a01){};
\draw (2,-1) node(a11) {$*$};
\draw[fill] (3, -1) circle (1pt) node(a12){};
\draw[fill] (3, 0) circle (1pt) node(a02){};
\draw (3,-2) node (a22){$*$};
\draw[fill] (4, 0) circle (1pt) node (a03){};
\draw[fill](4, -2) circle (1pt) node (a23){};
\draw (4, -3) node (a33){$*$};
\draw[mono] (a00)--(a01);
\draw[mono] (a11)--(a12);
\draw[mono] (a01)--(a02);
\draw[mono] (a02)--(a03);
\draw[mono] (a22)--(a23);
\draw[epi] (a01)--(a11);
\draw[epi] (a12)--(a22);
\draw[epi] (a02)--(a12);
 \draw[epi] (a03)..controls (4.2,-1)..(a23);
\draw[epi] (a23)--(a33);
\begin{scope}[yshift=-0.3cm]
  \draw[twoarrowlonger] (2.2,0.1)--(2.8,-0.5);
  \draw[twoarrowlonger] (3.2,-0.4)--(3.8,-1.0);
\end{scope}
\end{scope}
    \end{tikzpicture}
\end{center}
\end{lem}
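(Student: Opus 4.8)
The plan is to compute $S_\bullet\cD$ through the defining formula $S_n\cD=\Map(\cP\Delta[n],\cD)$ of \cref{MappingSpaces}, and to exploit that, since $\cD$ is injectively fibrant, the functor $\Map(-,\cD)$ is a model for the derived mapping space and therefore sends homotopy colimits of preaugmented bisimplicial spaces to homotopy limits of spaces. Concretely, the second $2$-Segal map of $S_\bullet\cD$ is induced by the two face inclusions $\Delta[n]\cong\Delta[\{0,\dots,n-1,n+1\}]\hookrightarrow\Delta[n+1]$ and $\Delta[2]\cong\Delta[\{n-1,n,n+1\}]\hookrightarrow\Delta[n+1]$, which meet exactly along the edge $\Delta[1]\cong\Delta[\{n-1,n+1\}]$. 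First I would show that, after applying $\cP$ and then $\Map(-,\cD)$, this configuration realizes the homotopy pullback $S_n\cD\times^h_{S_1\cD}S_2\cD$.

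The key elementary observation is that $\cP$ is a restriction functor: reading off \cref{PathSADSS}, it is precomposition along the functor $\Sigma\to\Delta$ sending $(a,b)\mapsto[a+1+b]$ and $[-1]\mapsto[0]$. In particular $\cP$ preserves all colimits and all monomorphisms. Writing $\Lambda\subseteq\Delta[n+1]$ for the subcomplex $\Delta[\{0,\dots,n-1,n+1\}]\cup\Delta[\{n-1,n,n+1\}]$, which is the pushout $\Delta[2]\sqcup_{\Delta[1]}\Delta[n]$ along the inclusions above, I then obtain that $\cP\Lambda=\cP\Delta[2]\sqcup_{\cP\Delta[1]}\cP\Delta[n]$ is a pushout along a monomorphism, hence a homotopy pushout. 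Applying $\Map(-,\cD)$ and using injective fibrancy of $\cD$ converts this into a homotopy pullback, so that
\[
\Map(\cP\Lambda,\cD)\xrightarrow{\simeq}S_n\cD\times^h_{S_1\cD}S_2\cD.
\]

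The heart of the argument, and the step I expect to be the main obstacle, is to show that the inclusion $\cP\Lambda\hookrightarrow\cP\Delta[n+1]$ becomes an equivalence under $\Map(-,\cD)$, so that $S_{n+1}\cD=\Map(\cP\Delta[n+1],\cD)$ may replace $\Map(\cP\Lambda,\cD)$ on the left. The simplices of $\Delta[n+1]$ missing from $\Lambda$ are precisely those whose vertex set contains the vertex $n$ together with at least one vertex in $\{0,\dots,n-2\}$. I would filter the inclusion $\Lambda\hookrightarrow\Delta[n+1]$ by attaching these missing simplices in order of dimension, so that $\cP$ carries each stage to a pushout of preaugmented bisimplicial spaces. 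The content is that each attachment is sent by $\Map(-,\cD)$ to an equivalence: unwinding the path construction, each missing cell is glued along a configuration that is filled up to homotopy by one of the defining equivalences of \cref{sadss} -- chiefly the double Segality condition, which detects composable chains of squares from their constituents, with the stability and augmentation conditions handling the cells that involve the distinguished diagonal edge and the vertex realized through the augmentation $\cP_{-1}\Delta[n+1]$ respectively (the latter also governing the base cases $n=0,1$, where the triangle degenerates). The delicate part will be the bookkeeping: one must verify that the boundary along which each missing cell is attached already lies in the previous stage, so that the attachment is genuinely governed by a single instance of one of the three conditions rather than a more intricate colimit.

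Granting this comparison, I would conclude by composing the two equivalences, $S_{n+1}\cD\xrightarrow{\simeq}\Map(\cP\Lambda,\cD)\xrightarrow{\simeq}S_n\cD\times^h_{S_1\cD}S_2\cD$, and identify the composite with the canonical $2$-Segal map of \cref{2SegalSpace} by naturality in the cosimplicial variable $n\mapsto\Delta[n]\mapsto\cP\Delta[n]$. The ambiguity flagged in the footnote -- whether the composite square is recorded -- is immaterial exactly because double Segality identifies a chain of squares with its composite up to equivalence, so both readings of the picture compute the same homotopy pullback. The first $2$-Segal map, and hence the full family of $2$-Segal equivalences, then follows by the evident symmetric argument splitting off the triangle $\Delta[\{0,1,2\}]$ at the opposite end.
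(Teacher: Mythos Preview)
Your approach is correct and is essentially the strategy the paper defers to when it cites \cite[Lemma~5.12]{BOORS3} for general $n$: recognize $\cP$ as precomposition along a functor $\Sigma\to\Delta$ (hence colimit- and monomorphism-preserving), so that the pushout $\Lambda=\Delta[\{0,\dots,n-1,n+1\}]\cup_{\Delta[\{n-1,n+1\}]}\Delta[\{n-1,n,n+1\}]$ gives $\Map(\cP\Lambda,\cD)\simeq S_n\cD\times^h_{S_1\cD}S_2\cD$, and then filter $\cP\Lambda\hookrightarrow\cP\Delta[n+1]$ by cell attachments that become equivalences under $\Map(-,\cD)$. The paper's own sketch for $n=2$ is the same idea presented pictorially from the target side: rather than name the subcomplex $\Lambda$, it factors the map through an explicit intermediate space $F$ built from $\cD_{a,b}$'s and reads off which axiom justifies each of the two steps.

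One small over-caution in your outline: the augmentation condition is not needed for this lemma. Since $\Lambda$ already contains every vertex of $\Delta[n+1]$, the inclusion $\cP\Lambda\hookrightarrow\cP\Delta[n+1]$ is an isomorphism on the $(-1)$-component, so the missing cells live entirely in the underlying bisimplicial part and are filled using double Segality and stability alone---exactly the two conditions the paper's sketch invokes. (The augmentation condition enters in \cref{FirstRow} and \cref{Filtration11}, where one is genuinely stripping away the diagonal of $*$'s, but not here.) Relatedly, the cases $n=0,1$ are degenerate in the sense that $\Lambda=\Delta[n+1]$ already, so there is nothing to fill; the $2$-Segal conditions of \cref{2SegalSpace} only have content for $n\ge 2$.
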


We illustrate the idea in the case $n=2$. An analog (though much more intricate) argument could be provided for the case of arbitrary $n$ using \cite[Lemma 5.12]{BOORS3} and an appropriate generalization of- \cite[Proposition~2.4.8]{DKbook}.

\begin{proof}[Idea of the proof of \cref{Lemma2Segal}]
There are equivalences of spaces of the form:
\[S_{n+1}\cD\xrightarrow{\simeq}S_n\cD\times^h_{S_1\cD}S_2\cD,\]
which can be constructed as follows:
\begin{itemize}[leftmargin=*]
    \item First, there is an equivalence of spaces
    \[S_{n+1}\cD\xrightarrow{\simeq}S_n\cD\times^h_{S_1\cD}S_2\cD\times^h_{\cD_{1,1}\times_{\cD_{0,0}}\cD_{0,2}}\cD_{1,2}=:F,\]
    which essentially forgets the bottom square and one of the vertical morphisms, and can be depicted as
\begin{center}
    \begin{tikzpicture}[scale=0.7]
    \draw[thick] (6.5,0.5) rectangle (10.5,-3.5);
    \draw[thick] (0.5,0.5) rectangle (4.5,-3.5);
\begin{scope}[xshift=0cm, yshift=0cm]
     \draw (1,0) node(a00){$*$};
\draw[fill] (2,0) circle (1pt) node(a01){};
\draw (2,-1) node(a11) {$*$};
\draw[fill] (3, -1) circle (1pt) node(a12){};
\draw[fill] (3, 0) circle (1pt) node(a02){};
\draw (3,-2) node (a22){$*$};
\draw[fill] (4, 0) circle (1pt) node (a03){};
\draw[fill] (4, -1) circle (1pt) node (a13){};
\draw[fill](4, -2) circle (1pt) node (a23){};
\draw (4, -3) node (a33){$*$};
\draw[mono] (a00)--(a01);
\draw[mono] (a11)--(a12);
\draw[mono] (a01)--(a02);
\draw[mono] (a02)--(a03);
\draw[mono] (a12)--(a13);
\draw[mono] (a22)--(a23);
\draw[epi] (a01)--(a11);
\draw[epi] (a12)--(a22);
\draw[epi] (a02)--(a12);
\draw[epi] (a03)--(a13);
\draw[epi] (a13)--(a23);
\draw[epi] (a23)--(a33);
\begin{scope}[yshift=-0.3cm]
  \draw[twoarrowlonger] (2.2,0.1)--(2.8,-0.5);
  \draw[twoarrowlonger] (3.2,0.1)--(3.8,-0.5);
  \draw[twoarrowlonger] (3.2,-0.9)--(3.8,-1.5);
\end{scope}
\end{scope}

\draw (6, -1.3) node[anchor=north east] (j5){$\mapsto$};
\begin{scope}[xshift=6cm, yshift=0cm]
     \draw (1,0) node(a00){$*$};
\draw[fill] (2,0) circle (1pt) node(a01){};
\draw (2,-1) node(a11) {$*$};
\draw[fill] (3, -1) circle (1pt) node(a12){};
\draw[fill] (3, 0) circle (1pt) node(a02){};
\draw (3,-2) node (a22){$*$};
\draw[fill] (4, 0) circle (1pt) node (a03){};
\draw[fill] (4, -1) circle (1pt) node (a13){};
\draw[fill](4, -2) circle (1pt) node (a23){};
\draw (4, -3) node (a33){$*$};
\draw[mono] (a00)--(a01);
\draw[mono] (a11)--(a12);
\draw[mono] (a01)--(a02);
\draw[mono] (a02)--(a03);
\draw[mono] (a12)--(a13);
\draw[mono] (a22)--(a23);
\draw[epi] (a01)--(a11);
\draw[epi] (a12)--(a22);
\draw[epi] (a02)--(a12);
\draw[epi] (a03)--(a13);
 \draw[epi] (a03)..controls (4.2,-1)..(a23);
\draw[epi] (a23)--(a33);
\begin{scope}[yshift=-0.3cm]
  \draw[twoarrowlonger] (2.2,0.1)--(2.8,-0.5);
  \draw[twoarrowlonger] (3.2,0.1)--(3.8,-0.5);
    \draw[twoarrowlonger] (3.2,-0.4)--(3.8,-1.0);
\end{scope}
\end{scope}
    \end{tikzpicture}
\end{center}
and can be understood to be an equivalence of spaces using the properties of double Segality and stability for $\cD$.
    \item Next, there is an equivalence of spaces
    \[F\xrightarrow{}S_n\cD\times^h_{S_1\cD}S_2\cD,\]
    which essentially forget the top right square and the cospan contained in its boundary, and can be depicted as
\begin{center}
    \begin{tikzpicture}[scale=0.7]
    \draw[thick] (6.5,0.5) rectangle (10.5,-3.5);
    \draw[thick] (0.5,0.5) rectangle (4.5,-3.5);
\begin{scope}[xshift=0cm, yshift=0cm]
     \draw (1,0) node(a00){$*$};
\draw[fill] (2,0) circle (1pt) node(a01){};
\draw (2,-1) node(a11) {$*$};
\draw[fill] (3, -1) circle (1pt) node(a12){};
\draw[fill] (3, 0) circle (1pt) node(a02){};
\draw (3,-2) node (a22){$*$};
\draw[fill] (4, 0) circle (1pt) node (a03){};
\draw[fill] (4, -1) circle (1pt) node (a13){};
\draw[fill](4, -2) circle (1pt) node (a23){};
\draw (4, -3) node (a33){$*$};
\draw[mono] (a00)--(a01);
\draw[mono] (a11)--(a12);
\draw[mono] (a01)--(a02);
\draw[mono] (a02)--(a03);
\draw[mono] (a12)--(a13);
\draw[mono] (a22)--(a23);
\draw[epi] (a01)--(a11);
\draw[epi] (a12)--(a22);
\draw[epi] (a02)--(a12);
\draw[epi] (a03)--(a13);
 \draw[epi] (a03)..controls (4.2,-1)..(a23);
\draw[epi] (a23)--(a33);
\begin{scope}[yshift=-0.3cm]
  \draw[twoarrowlonger] (2.2,0.1)--(2.8,-0.5);
  \draw[twoarrowlonger] (3.2,0.1)--(3.8,-0.5);
    \draw[twoarrowlonger] (3.2,-0.4)--(3.8,-1.0);
\end{scope}
\end{scope}

\draw (6, -1.3) node[anchor=north east] (j5){$\mapsto$};
\begin{scope}[xshift=6cm, yshift=0cm]
     \draw (1,0) node(a00){$*$};
\draw[fill] (2,0) circle (1pt) node(a01){};
\draw (2,-1) node(a11) {$*$};
\draw[fill] (3, -1) circle (1pt) node(a12){};
\draw[fill] (3, 0) circle (1pt) node(a02){};
\draw (3,-2) node (a22){$*$};
\draw[fill] (4, 0) circle (1pt) node (a03){};
\draw[fill](4, -2) circle (1pt) node (a23){};
\draw (4, -3) node (a33){$*$};
\draw[mono] (a00)--(a01);
\draw[mono] (a11)--(a12);
\draw[mono] (a01)--(a02);
\draw[mono] (a02)--(a03);
\draw[mono] (a22)--(a23);
\draw[epi] (a01)--(a11);
\draw[epi] (a12)--(a22);
\draw[epi] (a02)--(a12);
 \draw[epi] (a03)..controls (4.2,-1)..(a23);
\draw[epi] (a23)--(a33);
\begin{scope}[yshift=-0.3cm]
  \draw[twoarrowlonger] (2.2,0.1)--(2.8,-0.5);
    \draw[twoarrowlonger] (3.2,-0.4)--(3.8,-1.0);
\end{scope}
\end{scope}
    \end{tikzpicture}
\end{center}
and can be understood to be an equivalence of spaces using the property of double Segality and stability for $\cD$.
\end{itemize}
The desired result then follows.
\end{proof}

We can now prove the proposition:

\begin{proof}[Proof of \cref{Sdot2Segal}]
One kind of $2$-Segal map for $S_\bullet\cD$ was discussed as \cref{Lemma2Segal}, and the other one can be treated analogously.
\end{proof}

\begin{rmk}
    There is a functor
    \[
    S_\bullet\colon\mathrm{saDblSegSp}\to\mathrm{2SegSp}\]
\end{rmk}

This functor is compatible with the homotopy theory of the objects involved:

\begin{prop}
\label{Shomotopical}
Let $\cD$ and $\cE$ be injectively fibrant stable augmented double Segal spaces. If $\varphi\colon \cD\xrightarrow{\simeq}\cE$ is an equivalence of stable augmented double Segal spaces, then
\[S_{\bullet}\varphi\colon S_{\bullet}\cD\xrightarrow{\simeq} S_{\bullet}\cE\]
is an equivalence of $2$-Segal spaces.
\end{prop}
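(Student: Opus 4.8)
The plan is to reduce the statement to a levelwise claim and then invoke the homotopy invariance of mapping spaces in the injective model structure on preaugmented bisimplicial spaces. By definition, $S_\bullet\varphi$ is an equivalence of $2$-Segal spaces precisely when, for every $n\geq0$, the component
\[
S_n\varphi\colon S_n\cD\to S_n\cE
\]
is an equivalence of spaces. Unwinding the definition of the $S_\bullet$-construction, this component is nothing but the map
\[
\Map(\cP\Delta[n],\varphi)\colon\Map(\cP\Delta[n],\cD)\to\Map(\cP\Delta[n],\cE)
\]
obtained by postcomposing with $\varphi$. So it suffices to show that $\Map(\cP\Delta[n],-)$ carries the equivalence $\varphi$ to an equivalence of spaces.

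To do this, I would work in the injective model structure on the category of preaugmented bisimplicial spaces, which is a simplicial model category whose tensoring and mapping space are exactly the operations $\boxtimes$ and $\Map$ fixed in \cref{MappingSpaces}. Three facts then line up: first, the cofibrations of the injective model structure are the monomorphisms, so every object --- and in particular $\cP\Delta[n]$ --- is cofibrant; second, $\cD$ and $\cE$ are injectively fibrant by hypothesis; and third, the weak equivalences are the levelwise equivalences, so $\varphi$ is a weak equivalence. The standard consequence of the pushout--product (SM7) axiom then says that, for a cofibrant source, the functor $\Map(\cP\Delta[n],-)$ sends a weak equivalence between fibrant objects to an equivalence of spaces (equivalently, $\Map(\cP\Delta[n],-)$ is right Quillen, since its left adjoint $K\mapsto\cP\Delta[n]\boxtimes K$ is left Quillen when $\cP\Delta[n]$ is cofibrant, so it preserves weak equivalences between fibrant objects by Ken Brown's lemma). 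Applying this to $\varphi$ gives the conclusion for each $n$, and hence that $S_\bullet\varphi$ is an equivalence of $2$-Segal spaces.

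The crux of the argument --- and the place where the injective fibrancy hypothesis is genuinely used --- is precisely this homotopy invariance: one must know that $\Map(\cP\Delta[n],-)$ computes a \emph{derived} mapping space, which requires mapping out of a cofibrant object into a fibrant one. Without fibrancy the strictly defined mapping space need not be homotopy invariant, and the statement can fail; this is exactly why the proposition is phrased for injectively fibrant $\cD$ and $\cE$. I expect no serious difficulty beyond carefully recording that $\boxtimes$ and $\Map$ furnish the simplicial enrichment of the injective model structure.

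As an alternative to citing the model structure wholesale, one could argue by hand: since $\cP\Delta[n]$ is the ``staircase'' diagram, a map $\cP\Delta[n]\to\cD$ amounts to a compatible family of objects, morphisms, and squares of $\cD$, so that $S_n\cD$ is expressible as an iterated homotopy limit of the spaces $\cD_{a,b}$ and $\cD_{-1}$, the homotopy-limit nature being guaranteed by the double Segality and stability conditions (as in the decomposition used in the proof of \cref{Lemma2Segal}). Since $\varphi$ is a levelwise equivalence, it induces an equivalence on these homotopy limits, recovering the same conclusion; but the model-categorical route is cleaner and avoids reindexing the staircase.
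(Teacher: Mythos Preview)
Your argument is correct but takes a genuinely different route from the paper. You invoke the general homotopy invariance of mapping spaces in the injective simplicial model structure on $\Sigma$-spaces: since every object---in particular $\cP\Delta[n]$---is injectively cofibrant and $\cD,\cE$ are fibrant by hypothesis, the simplicial model axiom (equivalently, Ken Brown's lemma applied to the right Quillen functor $\Map(\cP\Delta[n],-)$) yields that postcomposition with the levelwise equivalence $\varphi$ is an equivalence of spaces. This uses nothing about the stable augmented double Segal structure of $\cD$ and $\cE$; only injective fibrancy matters.

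The paper instead appeals to \cref{FirstRow}, which gives a natural identification $S_{n+1}\cD\simeq\cD_{0,n}$ (together with $S_0\cD\cong\cD_{-1}$), and then concludes by two-out-of-three from the hypothesis that $\varphi_{0,n}$ and $\varphi_{-1}$ are equivalences. This is in the spirit of the ``by hand'' alternative you sketch, but sharper: rather than an iterated homotopy limit, each $S_n\cD$ is identified with a single level of $\cD$. The paper's route costs the work of proving \cref{FirstRow} (which genuinely uses double Segality, stability, and augmentation), but that lemma is reused later in \cref{unit} and \cref{counit}, so the investment pays off. Your model-categorical route is more economical for this proposition in isolation and makes transparent that injective fibrancy, rather than any $2$-Segal or stability property, is the operative hypothesis here.
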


The proof of the proposition relies on a technical lemma:

\begin{lem}
\label{FirstRow}
Let $\cD$ be an injectively fibrant
stable augmented double Segal space.
For all $n\geq0$ there is an equivalence of spaces
\[\Map(\cP\Delta[n+1],\cD)\xrightarrow{\simeq}\cD_{0,n}\]
given by essentially by ``selecting the first row''. For instance, when $n=3$ it can be depicted as
\begin{center} 
\begin{tikzpicture}[scale=0.7]
\draw[thick] (8,0.5) rectangle (13,-4.5);
\begin{scope}[xshift=7.5cm, yshift=0cm]
\draw[fill] (2,0) circle (1pt) node(a01){};
\draw[fill] (3, 0) circle (1pt) node(a02){};
\draw[fill] (4, 0) circle (1pt) node (a03){};
\draw[fill] (5, 0) circle (1pt) node (a04){};
\draw[mono] (a01)--(a02);
\draw[mono] (a02)--(a03);
\draw[mono] (a03)--(a04);
\end{scope}

\draw (7, -1.7) node[anchor=north east] (j5){{$\mapsto$}};
\draw[thick] (0,0.5) rectangle (5,-4.5);
\begin{scope}[xshift=-0.5cm]
     \draw (1,0) node(a00){$*$};
\draw[fill] (2,0) circle (1pt) node(a01){};
\draw (2,-1) node(a11) {$*$};
\draw[fill] (3, -1) circle (1pt) node(a12){};
\draw[fill] (3, 0) circle (1pt) node(a02){};
\draw (3,-2) node (a22){$*$};
\draw[fill] (4, 0) circle (1pt) node (a03){};
\draw[fill] (4, -1) circle (1pt) node (a13){};
\draw[fill](4, -2) circle (1pt) node (a23){};
\draw (4, -3) node (a33){$*$};
\draw[fill] (5, 0) circle (1pt) node (a04){};
\draw[fill] (5, -1)circle (1pt) node (a14){};
\draw[fill] (5, -2)circle (1pt) node (a24){};
\draw[fill] (5, -3)circle(1pt) node (a34){};
\draw (5, -4) node (a44){$*$};
\draw[mono] (a00)--(a01);
\draw[mono] (a11)--(a12);
\draw[mono] (a01)--(a02);
\draw[mono] (a02)--(a03);
\draw[mono] (a12)--(a13);
\draw[mono] (a22)--(a23);
\draw[mono] (a03)--(a04);
\draw[mono] (a13)--(a14);
\draw[mono] (a23)--(a24);
\draw[mono] (a33)--(a34);
\draw[epi] (a01)--(a11);
\draw[epi] (a12)--(a22);
\draw[epi] (a02)--(a12);
\draw[epi] (a03)--(a13);
\draw[epi] (a13)--(a23);
\draw[epi] (a23)--(a33);
\draw[epi] (a04)--(a14);
\draw[epi] (a14)--(a24);
\draw[epi] (a24)--(a34);
\draw[epi] (a34)--(a44);
\begin{scope}[yshift=-0.3cm]
   \draw[twoarrowlonger] (2.2,0.1)--(2.8,-0.5);
   \draw[twoarrowlonger] (3.2,0.1)--(3.8,-0.5);
   \draw[twoarrowlonger] (3.2,-0.9)--(3.8,-1.5);
   \draw[twoarrowlonger] (4.2,0.1)--(4.8,-0.5);
   \draw[twoarrowlonger] (4.2,-1.9)--(4.8,-2.5);
   \draw[twoarrowlonger] (4.2,-0.9)--(4.8,-1.5);
\end{scope}
\end{scope}
\end{tikzpicture}
\end{center}
\end{lem}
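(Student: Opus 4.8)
The plan is to realize the displayed map as restriction along the inclusion of the first row, and then to prove that this restriction becomes an equivalence after applying $\Map(-,\cD)$ by building $\cP\Delta[n+1]$ up from its first row through finitely many cell attachments, each of which is rendered invisible by exactly one of the three conditions of \cref{sadss}. Throughout I use that, since $\cD$ is injectively fibrant and the injective model structure on $\Sigma^{\mathrm{op}}$-spaces has the monomorphisms as cofibrations with every object cofibrant, the functor $\Map(-,\cD)$ sends homotopy pushouts along monomorphisms to homotopy pullbacks.

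First I would set up the comparison map. Writing $\Sigma[a,b]$ for the representable preaugmented bisimplicial space, so that $\Map(\Sigma[a,b],\cD)\simeq\cD_{a,b}$, the first row of $\cP\Delta[n+1]$ -- the chain of horizontal morphisms on the genuine objects $a_{01}\rightarrowtail\cdots\rightarrowtail a_{0,n+1}$ -- is classified by a monomorphism $\iota\colon\Sigma[0,n]\hookrightarrow\cP\Delta[n+1]$. Restriction along $\iota$ is precisely the ``select the first row'' map, and $\Map(\Sigma[0,n],\cD)\simeq\cD_{0,n}$, so it remains to prove that $\iota^*\colon\Map(\cP\Delta[n+1],\cD)\to\Map(\Sigma[0,n],\cD)$ is an equivalence. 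Consequently, if $\cP\Delta[n+1]$ is obtained from $\Sigma[0,n]$ by finitely many pushouts along monomorphisms $B\hookrightarrow C$ for which $\Map(C,\cD)\to\Map(B,\cD)$ is already an equivalence, then $\iota^*$ is an equivalence as a composite of base changes of equivalences.

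The heart of the argument is to produce such a filtration, reading the staircase of \cref{Interpretation} corner by corner. Each interior object $a_{ij}$ with $i\geq1$ enters as the bottom-right corner of a square whose top and left edges are already present; attaching the corresponding square $\Sigma[1,1]$ along its span $\Sigma[1,0]\sqcup_{\Sigma[0,0]}\Sigma[0,1]$ induces on mapping spaces the map $(d^v_1,d^h_1)\colon\cD_{1,1}\to\cD_{1,0}\times^h_{\cD_{0,0}}\cD_{0,1}$, an equivalence by the stability condition. The top-left augmentation object $a_{00}$ enters as the capped source of the horizontal morphism $a_{00}\rightarrowtail a_{01}$, inducing $d^h_0\colon\cD_{-1}\times^h_{\cD_{0,0}}\cD_{0,1}\to\cD_{0,0}$, while each diagonal augmentation object $a_{jj}$ with $j\geq1$ enters as the capped target of the vertical morphism $a_{j-1,j}\twoheadrightarrow a_{jj}$, inducing $d^v_1\colon\cD_{1,0}\times^h_{\cD_{0,0}}\cD_{-1}\to\cD_{0,0}$; both are equivalences by the augmentation condition. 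Finally, the higher composites recorded by $\cP\Delta[n+1]$ -- longer chains of morphisms and the composite squares living in bidegrees $(k,\ell)$ with $k+\ell\geq2$ -- are attached along their spines, and these attachments are invisible to $\Map(-,\cD)$ by the double Segality condition. Thus every attaching map is of the required form, so $\iota^*$ is an equivalence; since $\iota^*$ is manifestly natural in $\cD$, so is the resulting equivalence with $\cD_{0,n}$, which is what one needs to deduce \cref{Shomotopical}.

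The main obstacle I anticipate is purely organizational: specifying a linear order on the cells of the staircase in which every attachment has its full span (or spine, or capping object) already present, so that each step is genuinely a pushout along a monomorphism of one of the three model types above. For $n=3$ this is transparent from the picture accompanying the statement, and the general case follows the same pattern; but one must interleave the Segal, stability, and augmentation attachments correctly, and this combinatorial bookkeeping -- rather than any new homotopical input -- is where the real work lies.
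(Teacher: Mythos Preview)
Your proposal is correct and is essentially the paper's own approach, just phrased from the dual perspective: where the paper writes a descending tower of target spaces $\Map(\cP\Delta[2],\cD)\to F^{(1)}\to F^{(2)}\to\cD_{0,1}$ and reads off which axiom justifies each step, you describe the corresponding ascending filtration $\Sigma[0,n]\hookrightarrow\cdots\hookrightarrow\cP\Delta[n+1]$ of the source and argue that each pushout becomes a homotopy pullback equivalence after $\Map(-,\cD)$. The paper's remark that the general case follows ``by adjusting the filtration from \cite[Lemma~5.12]{BOORS3}'' is exactly the bookkeeping you anticipate, and the three classes of attachments you identify (span $\hookrightarrow$ square for stability, capped-source/target morphisms for augmentation, spine inclusions for double Segality) are precisely the ones the paper invokes in its $n=1$ sketch.
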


We illustrate the idea in the case $n=1$. An analog (though much more intricate) argument can be provided for the case of arbitrary $n$ by adjusting the filtration from \cite[Lemma~5.12]{BOORS3}.

\begin{proof}[Proof Idea of \cref{FirstRow} for $n=1$]
There is a tower of equivalences of spaces of the form:
\[\Map(\cP\Delta[2],\cD)
\xrightarrow{\simeq}F^{(1)}
\xrightarrow{\simeq}F^{(2)}
\xrightarrow{\simeq}\cD_{0,1},\]
which can be constructed as follows.
\begin{itemize}[leftmargin=*]
    \item First, there is an equivalence of spaces
    \[\Map(\cP\Delta[2],\cD)\xrightarrow{\simeq}\cD_{1,1}\times^h_{\cD_{0,0}}\cD_{-1}=:F^{(1)},\]
    which can be depicted as
\begin{center}
    \begin{tikzpicture}[scale=0.7]
\draw[thick] (0.5,0.5) rectangle (3.5,-2.5);
\draw[thick] (4.5,0.5) rectangle (7.5,-2.5);
\begin{scope}[xshift=0cm, yshift=0cm]
     \draw (1,0) node(a00){$*$};
\draw[fill] (2,0) circle (1pt) node(a01){};
\draw (2,-1) node(a11) {$*$};
\draw[fill] (3, -1) circle (1pt) node(a12){};
\draw[fill] (3, 0) circle (1pt) node(a02){};
\draw (3,-2) node (a22){$*$};
\draw[mono] (a00)--(a01);
\draw[mono] (a11)--(a12);
\draw[mono] (a01)--(a02);
\draw[epi] (a01)--(a11);
\draw[epi] (a12)--(a22);
\draw[epi] (a02)--(a12);
\begin{scope}[yshift=-0.3cm]
  \draw[twoarrowlonger] (2.2,0.1)--(2.8,-0.5);
\end{scope}
\end{scope}
\draw (4.5, -0.8) node[anchor=north east] (j5){{$\mapsto$}};
%
\begin{scope}[xshift=4cm, yshift=0cm]
\draw[fill] (2,0) circle (1pt) node(a01){};
\draw (2,-1) node(a11) {$*$};
\draw[fill] (3, -1) circle (1pt) node(a12){};
\draw[fill] (3, 0) circle (1pt) node(a02){};
\draw[mono] (a11)--(a12);
\draw[mono] (a01)--(a02);
\draw[epi] (a01)--(a11);
\draw[epi] (a02)--(a12);
\begin{scope}[yshift=-0.3cm]
  \draw[twoarrowlonger] (2.2,0.1)--(2.8,-0.5);
\end{scope}
\end{scope}
\end{tikzpicture}
\end{center}
and can be understood to be an equivalence of spaces using the properties of double Segality and augmentation for $\cD$.
    \item Next, there is an equivalence of spaces of the form
    \[F^{(1)}\xrightarrow{\simeq}\cD_{1,0}\times^h_{\cD_{0,0}}\cD_{0,1}\times^h_{\cD_{0,0}}\cD_{-1}\eqqcolon F^{(2)},\]
   which can be depicted as
    \begin{center}
    \begin{tikzpicture}[scale=0.7]
\draw[thick] (1.5,0.5) rectangle (3.5,-1.5);
\begin{scope}[xshift=3.6cm, yshift=0cm]
\draw[fill] (2,0) circle (1pt) node(a01){};
\draw (2,-1) node(a11) {$*$};
\draw[fill] (3, 0) circle (1pt) node(a02){};
%
\draw[mono] (a01)--(a02);
%
\draw[epi] (a01)--(a11);
\end{scope}
\draw (4.8, -0.3) node[anchor=north east] (j5){{$\mapsto$}};
\draw[thick] (5.1,0.5) rectangle (7.1,-1.5);
\begin{scope}
\draw[fill] (2,0) circle (1pt) node(a01){};
\draw (2,-1) node(a11) {$*$};
\draw[fill] (3, -1) circle (1pt) node(a12){};
\draw[fill] (3, 0) circle (1pt) node(a02){};
\draw[mono] (a11)--(a12);
\draw[mono] (a01)--(a02);
\draw[epi] (a01)--(a11);
\draw[epi] (a02)--(a12);
\begin{scope}[yshift=-0.3cm]
  \draw[twoarrowlonger] (2.2,0.1)--(2.8,-0.5);
\end{scope}
\end{scope}
\end{tikzpicture}
\end{center}
and can be understood to be an equivalence of spaces using the property of Double Segality and stability for $\cD$.
    \item Finally, there is an equivalence of spaces of the form
    \[F^{(2)}\xrightarrow{\simeq}\cD_{0,1},\]
    which can be depicted as
\begin{center}
    \begin{tikzpicture}[scale=0.7]
\draw[thick] (1.5,0.5) rectangle (3.5,-1.5);
\draw[thick] (5,0.5) rectangle (7,-1.5);
\begin{scope}
\draw[fill] (2,0) circle (1pt) node(a01){};
\draw (2,-1) node(a11) {$*$};
\draw[fill] (3, 0) circle (1pt) node(a02){};
%
\draw[mono] (a01)--(a02);
%
\draw[epi] (a01)--(a11);
\end{scope}
\draw (4.8, -0.3) node[anchor=north east] (j5){{$\mapsto$}};
%
\begin{scope}[xshift=3.5cm, yshift=0cm]
\draw[fill] (2,0) circle (1pt) node(a01){};
\draw[fill] (3, 0) circle (1pt) node(a02){};
\draw[mono] (a01)--(a02);
\end{scope}
\end{tikzpicture}
\end{center}
    and can be understood to be an equivalence of spaces using the property of double Segality
    and augmentation for $\cD$.
\end{itemize}
The desired result then follows.
\end{proof}

We can now prove the proposition.

\begin{proof}[Proof of \cref{Shomotopical}]
Since $\varphi\colon\cD\to\cE$ is an equivalence of stable augmented double Segal spaces, for all $n\geq0$ there is an equivalence of spaces
\[
\varphi_{0,n}\colon\cD_{0,n}\xrightarrow{\simeq}\cE_{0,n}.
\]
By \cref{FirstRow}, we can deduce that there is also an equivalence of spaces
\[\varphi_*\colon \Map(\cP\Delta[n],\cD)\xrightarrow{\simeq} \Map(\cP\Delta[n],\cE),\]
which is by definition the map
\[S_{n}\varphi\colon S_{n}\cD\xrightarrow{\simeq} S_{n}\cE.\]
We then obtain an equivalence of $2$-Segal spaces
$S_\bullet\varphi\colon S_{\bullet}\cD\xrightarrow{\simeq} S_{\bullet}\cE$,
as desired.
\end{proof}

Thanks to \cref{Shomotopical}, we obtain that the path construction descends to equivalence classes in the following sense:

\begin{rmk}
    There is a function
    \[
  S_\bullet\circ\widetilde{(-)}\colon\mathrm{saDblSegSp}/_\simeq\ \to\ \mathrm{2SegSp}/_\simeq
  \]
\end{rmk}

Moreover, it follows from \cite[\textsection1.5]{BarwickKan} that the path construction also induces a functor at the level of simplicial localizations, so we obtain:

\begin{const}
The $S_\bullet$-construction induces a simplicial functor
\[
\bm{S_\bullet\circ\widetilde{(-)}}\colon\bm{\mathrm{saDblSegSp}}\to\bm{\mathrm{2SegSp}}.
\]
\end{const}

\section{The equivalence}

We can now explain how the path construction $\cP$ and the $S_\bullet$-construction define inverse correspondences.

In order to show that the path construction and the $S_\bullet$-constructions are inverse to each other, we start by analyzing the first composite:

\begin{prop}
\label{unit}
If $X$ is a $2$-Segal space, then there is an equivalence of $2$-Segal spaces
\[
\eta^h\colon X\xrightarrow{\simeq} S_{\bullet}\widetilde{\cP X}.
\]

\end{prop}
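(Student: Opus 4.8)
The plan is to show that $\eta^h$ is a levelwise equivalence of simplicial spaces. Since $X$ is a $2$-Segal space by hypothesis and $S_\bullet\widetilde{\cP X}$ is a $2$-Segal space by \cref{Sdot2Segal} — using that $\cP X$ is a stable augmented double Segal space and that its injectively fibrant replacement $\widetilde{\cP X}$ remains one — an equivalence of $2$-Segal spaces is precisely a map inducing an equivalence $X_n\xrightarrow{\simeq}S_n\widetilde{\cP X}$ for every $n$. The map $\eta^h$ itself is an adjunction unit followed by fibrant replacement: one checks that $\cP$ is the realization functor left adjoint to $S_\bullet$ (indeed $\cP Y=\int^{[m]}Y_m\cdot\cP\Delta[m]$ recovers $(\cP Y)_{a,b}=Y_{a+1+b}$ by the co-Yoneda lemma), so its unit sends an $n$-simplex $y\colon\Delta[n]\to X$ to $\cP(y)\colon\cP\Delta[n]\to\cP X$; composing with $S_\bullet$ of the replacement $\cP X\to\widetilde{\cP X}$ yields $\eta^h$. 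Naturality in $[n]$ makes $\eta^h$ a map of simplicial spaces, so only the levelwise statement remains.

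For $n\geq1$ I would compare $\eta^h_n$ with the ``first row'' equivalence
\[
r_n\colon S_n\widetilde{\cP X}=\Map(\cP\Delta[n],\widetilde{\cP X})\xrightarrow{\simeq}(\widetilde{\cP X})_{0,n-1}
\]
supplied by \cref{FirstRow}, which restricts a map to the top horizontal chain. Since $(\cP X)_{0,n-1}=X_{0+1+(n-1)}=X_n$, the fibrant replacement furnishes an equivalence $j_n\colon X_n=(\cP X)_{0,n-1}\xrightarrow{\simeq}(\widetilde{\cP X})_{0,n-1}$. A direct Yoneda computation gives the triangle identity $r_n\circ\eta^h_n=j_n$: evaluating $\cP(y)$ on the top simplex $\id_{[n]}\in(\cP\Delta[n])_{0,n-1}=(\Delta[n])_n$ returns $y$, whose image under the replacement at bidegree $(0,n-1)$ is exactly $j_n(y)$. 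As $r_n$ and $j_n$ are equivalences, it follows that $\eta^h_n$ is an equivalence.

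The case $n=0$ must be handled separately, as \cref{FirstRow} starts from $\cP\Delta[1]$. Here $\cP\Delta[0]$ is the terminal preaugmented bisimplicial space, and — arguing as in the last step of the proof idea of \cref{FirstRow}, now using the augmentation condition of \cref{sadss} — one identifies $S_0\widetilde{\cP X}=\Map(\cP\Delta[0],\widetilde{\cP X})\simeq(\widetilde{\cP X})_{-1}\simeq(\cP X)_{-1}=X_0$ compatibly with $\eta^h_0$. I expect the main obstacle to be exactly this interface with \cref{FirstRow}: one must confirm that the abstract equivalence provided there is genuinely realized by restriction to the first row (so that the triangle $r_n\circ\eta^h_n=j_n$ makes sense as written), and that the augmentation-driven argument at $n=0$ meshes with the unit $\eta^h_0$ rather than merely producing an unrelated equivalence of the same source and target.
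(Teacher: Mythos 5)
Your proposal is correct and follows essentially the same route as the paper's proof: factor $\eta^h$ through the unit $X\to S_\bullet\cP X$ and the fibrant replacement, then for each level use the first-row equivalence of \cref{FirstRow} together with the levelwise equivalence $(\cP X)_{0,n}\to(\widetilde{\cP X})_{0,n}$ and two-out-of-three, treating the degree-$0$/augmentation level separately. The commuting triangle you verify via Yoneda is exactly the commutative diagram the paper invokes by naturality of mapping spaces, so the ``interface'' worry you flag is already resolved by the stated form of \cref{FirstRow}.
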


\begin{proof}
There is a canonical simplicial map
\[
\eta^h\colon X\xrightarrow{\eta} S_{\bullet}\cP X\to S_{\bullet}\widetilde{\cP X}.\]
Now, for all $n\geq0$, using the naturality of mapping spaces from \cref{MappingSpaces} we see that there are commutative diagrams of spaces
\[
\begin{tikzcd}
S_{n+1}\cP X\arrow[r]&S_{n+1}\widetilde{\cP X}\arrow[r,"\simeq"]&(\widetilde{\cP X})_{0,n}\\
&X_{n+1}\arrow[r,equal]\arrow[u,"\eta_{n+1}^h" swap]\arrow[lu,"\eta_{n+1}"]&(\cP X)_{0,n}\arrow[u,"\simeq"]
\end{tikzcd}
\text{ and }
\begin{tikzcd}
S_{0}\cP X\arrow[r]&S_{0}\widetilde{\cP X}\arrow[r,"\cong"]&(\widetilde{\cP X})_{-1}\\
&X_{0}\arrow[r,equal]\arrow[u,"\eta_{0}^h" swap]\arrow[lu,"\eta_{0}"]&(\cP X)_{-1}\arrow[u,"\simeq"]
\end{tikzcd}
\]
where the horizontal map in the first diagram is an equivalence as an instance of \cref{FirstRow}.
By two-out-of-three, we then obtain equivalences of spaces
\[
\eta_{n+1}^h\colon X\xrightarrow{\simeq} S_{n+1}\widetilde{\cP X}\quad\text{ and }\quad\eta_{0}^h\colon X_0\xrightarrow{\simeq} S_{0}\widetilde{\cP X},\]
and an equivalence of $2$-Segal spaces
$
\eta^h\colon X\to S_{\bullet}\widetilde{\cP X}$,
as desired.
\end{proof}

Next, we address the other composite:

\begin{prop}
\label{counit}
    If $\cD$ is an injectively fibrant stable augmented double Segal space, then there is an equivalence of stable augmented double Segal spaces
\[
\epsilon\colon \cP S_{\bullet}\cD\xrightarrow{\simeq} \cD.
\]
\end{prop}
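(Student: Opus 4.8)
The plan is to realize $\epsilon$ as a natural counit map and then check it levelwise, the levelwise statement being a generalization of \cref{FirstRow}. First I would define $\epsilon\colon\cP S_\bullet\cD\to\cD$ by its components. For $a,b\geq0$ we have $(\cP S_\bullet\cD)_{a,b}=S_{a+1+b}\cD=\Map(\cP\Delta[a+1+b],\cD)$, and I would take
\[
\epsilon_{a,b}\colon\Map(\cP\Delta[a+1+b],\cD)\to\cD_{a,b}
\]
to be evaluation at the canonical $(a,b)$-bisimplex $\iota_{a,b}\in(\cP\Delta[a+1+b])_{a,b}=(\Delta[a+1+b])_{a+1+b}$, namely the top nondegenerate simplex $\mathrm{id}_{[a+1+b]}$. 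At the augmentation level $(\cP S_\bullet\cD)_{-1}=S_0\cD=\Map(\cP\Delta[0],\cD)$, and $\epsilon_{-1}$ is evaluation at the unique augmentation simplex of $\cP\Delta[0]$. Since the canonical bisimplices are compatible with the coface and codegeneracy maps of the cosimplicial object $n\mapsto\cP\Delta[n]$, which are exactly what induce the bisimplicial structure maps of $\cP S_\bullet\cD$, a direct combinatorial check shows that these components assemble into a map of preaugmented bisimplicial spaces. Equivalently, $\epsilon$ is the counit of the space-enriched adjunction $\cP\dashv S_\bullet$ determined by $S_n\cD=\Map(\cP\Delta[n],\cD)$, whose unit is the map $\eta$ of \cref{unit}.

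Because an equivalence of stable augmented double Segal spaces is by definition a levelwise equivalence of spaces, it then suffices to prove that each $\epsilon_{a,b}$ and $\epsilon_{-1}$ is an equivalence. The augmentation level is immediate: all constituent spaces of $\cP\Delta[0]$ are a point, so $\cP\Delta[0]$ corepresents the augmentation-space functor $\cD\mapsto\cD_{-1}$; hence $\Map(\cP\Delta[0],\cD)\cong\cD_{-1}$ and $\epsilon_{-1}$ is an isomorphism.

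The heart of the argument is the levelwise equivalence for $a,b\geq0$, which is the generalization of \cref{FirstRow} obtained by selecting the $(a,b)$-block rather than only the first row:
\[
\Map(\cP\Delta[a+1+b],\cD)\xrightarrow{\simeq}\cD_{a,b}.
\]
I would prove this exactly as \cref{FirstRow}, by building a tower of equivalences that peels the triangular staircase diagram $\cP\Delta[a+1+b]$ down to its single $(a,b)$-rectangle. At each stage one applies double Segality to split off a Segal building block, stability to reduce a square to the span or cospan contained in its boundary, and augmentation to absorb a distinguished $\ast$-corner along the boundary; injective fibrancy of $\cD$ guarantees that the relevant mapping spaces and homotopy pullbacks behave correctly. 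The case $a=0$ recovers \cref{FirstRow} verbatim.

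The main obstacle is precisely this generalized \cref{FirstRow}: organizing the filtration of $\cP\Delta[a+1+b]$ for arbitrary $(a,b)$ and checking, stage by stage, which of the three defining conditions of \cref{sadss} applies at each step (this is where one invokes the filtration of \cite[Lemma~5.12]{BOORS3} and its dual). A secondary but necessary point is the naturality bookkeeping from the first paragraph, ensuring that the evaluation maps at the $\iota_{a,b}$ are compatible with every vertical and horizontal face and degeneracy map as well as with the augmentation, so that the levelwise equivalences genuinely assemble into an equivalence $\epsilon\colon\cP S_\bullet\cD\xrightarrow{\simeq}\cD$ of stable augmented double Segal spaces, as claimed.
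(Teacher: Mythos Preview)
Your proposal is correct, and the map you write down is exactly the paper's $\epsilon$ (your ``evaluation at $\iota_{a,b}$'' is the paper's ``select the internal $[a]\times[b]$-grid''). The route, however, differs in one substantive way.

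You propose to verify that $\epsilon_{a,b}$ is an equivalence for \emph{every} $(a,b)$, by proving a uniform generalization of \cref{FirstRow}:
\[
\Map(\cP\Delta[a+1+b],\cD)\xrightarrow{\simeq}\cD_{a,b}.
\]
The paper instead observes that, since both $\cP S_\bullet\cD$ and $\cD$ are stable augmented double Segal spaces, double Segality expresses every $\cD_{a,b}$ as an iterated homotopy pullback of the pieces $\cD_{0,0}$, $\cD_{1,0}$, $\cD_{0,1}$, $\cD_{1,1}$; hence it suffices to check that $\epsilon$ is an equivalence only at the bidegrees $(-1)$, $(0,0)$, $(1,0)$, $(0,1)$, $(1,1)$. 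These five checks are then handled by \cref{FirstRow} (and its dual) together with a single additional lemma, \cref{Filtration11}, treating the $(1,1)$ case. Your approach trades this structural reduction for a single, cleaner statement that simultaneously subsumes \cref{FirstRow} and \cref{Filtration11}; the cost is that the filtration argument must be organized for arbitrary $(a,b)$, which is exactly the obstacle you flag. The paper's approach buys a shorter proof; yours buys a more uniform lemma that makes the role of the staircase filtration explicit.
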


The proof of the proposition requires an auxiliary lemma:

\begin{lem}
\label{Filtration11}
Let $\cD$ be an injectively fibrant
stable augmented double Segal space. There is an equivalence of spaces
\[\Map(\cP\Delta[3],\cD)\xrightarrow{\simeq}\cD_{1,1}\]
which can be pictured as
\begin{center}
    \begin{tikzpicture}[scale=0.7]
    \draw[thick] (6.5,0.5) rectangle (10.5,-3.5);
    \draw[thick] (0.5,0.5) rectangle (4.5,-3.5);
\begin{scope}[xshift=6cm, yshift=0cm]
\draw[fill] (3, -1) circle (1pt) node(a12){};
\draw[fill] (3, 0) circle (1pt) node(a02){};
\draw[fill] (4, 0) circle (1pt) node (a03){};
\draw[fill] (4, -1) circle (1pt) node (a13){};

\draw[mono] (a02)--(a03);
\draw[mono] (a12)--(a13);

\draw[epi] (a02)--(a12);
\draw[epi] (a03)--(a13);

\begin{scope}[yshift=-0.3cm]
  \draw[twoarrowlonger] (3.2,0.1)--(3.8,-0.5);
\end{scope}
\end{scope}

\draw (6, -1.3) node[anchor=north east] (j5){$\mapsto$};
\begin{scope}[xshift=0cm, yshift=0cm]
     \draw (1,0) node(a00){$*$};
\draw[fill] (2,0) circle (1pt) node(a01){};
\draw (2,-1) node(a11) {$*$};
\draw[fill] (3, -1) circle (1pt) node(a12){};
\draw[fill] (3, 0) circle (1pt) node(a02){};
\draw (3,-2) node (a22){$*$};
\draw[fill] (4, 0) circle (1pt) node (a03){};
\draw[fill] (4, -1) circle (1pt) node (a13){};
\draw[fill](4, -2) circle (1pt) node (a23){};
\draw (4, -3) node (a33){$*$};
\draw[mono] (a00)--(a01);
\draw[mono] (a11)--(a12);
\draw[mono] (a01)--(a02);
\draw[mono] (a02)--(a03);
\draw[mono] (a12)--(a13);
\draw[mono] (a22)--(a23);
\draw[epi] (a01)--(a11);
\draw[epi] (a12)--(a22);
\draw[epi] (a02)--(a12);
\draw[epi] (a03)--(a13);
\draw[epi] (a13)--(a23);
\draw[epi] (a23)--(a33);
\begin{scope}[yshift=-0.3cm]
  \draw[twoarrowlonger] (2.2,0.1)--(2.8,-0.5);
  \draw[twoarrowlonger] (3.2,0.1)--(3.8,-0.5);
  \draw[twoarrowlonger] (3.2,-0.9)--(3.8,-1.5);
\end{scope}
\end{scope}
    \end{tikzpicture}
\end{center}
\end{lem}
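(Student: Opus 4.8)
The plan is to mirror the filtration argument used for \cref{FirstRow}, decomposing the mapping space $\Map(\cP\Delta[3],\cD)$ through a tower of equivalences that successively discards the redundant cells of the staircase generating $\cP\Delta[3]$ until only the central square survives. The asserted map $\Map(\cP\Delta[3],\cD)\to\cD_{1,1}$ is the one that evaluates a map $\cP\Delta[3]\to\cD$ at the single $(1,1)$-cell $\cP_{1,1}\Delta[3]=\Delta[3]_3$ corresponding to the nondegenerate $3$-simplex of $\Delta[3]$, i.e.\ the central square depicted on the right; concretely this is restriction along the inclusion of the representable square into $\cP\Delta[3]$.

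First I would set up a filtration of $\cP\Delta[3]$ by sub-preaugmented bisimplicial spaces $A_0\subseteq A_1\subseteq\dots\subseteq A_k=\cP\Delta[3]$, where $A_0$ is the sub-object generated by the central square, and each inclusion $A_i\hookrightarrow A_{i+1}$ attaches exactly one generating cell of the staircase -- a morphism, a boundary square, or a distinguished object $*$. Applying $\Map(-,\cD)$, each such attachment yields a restriction map $\Map(A_{i+1},\cD)\to\Map(A_i,\cD)$, and the goal is to recognize each of these as an equivalence using precisely one of the three conditions of \cref{sadss}: double Segality handles the cells that record a composite of two adjacent squares or morphisms; stability handles the two remaining squares of the staircase, which are determined by the span (resp.\ cospan) contained in their boundary; and augmentation handles the morphisms into and out of the distinguished objects $*$, which are determined up to equivalence by their non-distinguished endpoint. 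The injective fibrancy of $\cD$ is what guarantees that $\Map(-,\cD)$ converts these elementary inclusions into genuine equivalences of spaces and, correspondingly, that the homotopy pullbacks appearing in \cref{sadss} are computed correctly on mapping spaces.

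Concretely, the tower would read as a chain of homotopy-pullback expressions analogous to the one in the proof of \cref{FirstRow}: starting from $\cD_{1,1}$, one reconstructs in turn the two adjacent squares glued along shared morphisms (double Segality together with stability), and then the distinguished corners $*$ along the diagonal (augmentation), recovering at the top the full staircase $\Map(\cP\Delta[3],\cD)$. Reading this chain backwards gives the asserted equivalence $\Map(\cP\Delta[3],\cD)\xrightarrow{\simeq}\cD_{1,1}$, and by construction the composite of all the restriction maps is exactly evaluation at the central square.

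The main obstacle I anticipate is the combinatorial bookkeeping of the filtration: one must order the attachments so that at each stage the cell being added is genuinely detected by a single axiom -- a boundary square cannot be contracted via stability until the span or cospan forming its boundary has already been pinned down, and a distinguished-object constraint cannot be absorbed via augmentation until the adjacent morphism is in place. Getting this ordering right, and checking that the resulting composite restriction is the stated ``select the central square'' map rather than some other face, is the delicate part; this is precisely where the filtration of \cite[Lemma~5.12]{BOORS3} must be adapted, here in the simpler situation where only a single interior square remains after all contractions.
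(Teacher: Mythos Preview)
Your proposal is correct and follows essentially the same approach as the paper. The paper's proof likewise builds a tower of equivalences
\[
\Map(\cP\Delta[3],\cD)\xrightarrow{\simeq}F^{(1)}\xrightarrow{\simeq}F^{(2)}\xrightarrow{\simeq}\cD_{1,1},
\]
with each step peeling off part of the staircase using one of the three axioms: the first step strips the outer augmented corners (double Segality plus augmentation), the second collapses the two side squares via their span/cospan boundaries (double Segality plus stability), and the third removes the remaining augmented morphisms (augmentation again). Your cell-by-cell filtration is a finer version of the same decomposition, and your cautions about ordering the attachments and identifying the composite as ``select the central square'' are exactly the points the paper addresses pictorially before deferring the rigorous filtration to \cite[Lemma~6.6]{BOORS3}.
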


We illustrate the idea, and refer the reader to  \cite[Lemma~6.6]{BOORS3} for a rigorous proof. 

\begin{proof}[Idea of the proof of \cref{Filtration11}]
There is a tower of equivalences of spaces of the form:
\[\Map(\cP\Delta[3],\cD)
\xrightarrow{\simeq}F^{(1)}
\xrightarrow{\simeq}F^{(2)}
\xrightarrow{\simeq}\cD_{1,1},\]
which can be constructed as follows:
\begin{itemize}[leftmargin=*]
    \item First, there is an equivalence of spaces
    \[\Map(\cP\Delta[3],\cD)\xrightarrow{\simeq}\cD_{2,1}\times^h_{\cD_{1,1}}\cD_{1,2}\times^h_{\cD_{0,0}}\cD_{-1}\times^h_{\cD_{0,0}}\cD_{-1}=:F^{(1)},\]
    which can be depicted as
\begin{center}
    \begin{tikzpicture}[scale=0.7]
    \draw[thick] (6.5,0.5) rectangle (10.5,-3.5);
    \draw[thick] (0.5,0.5) rectangle (4.5,-3.5);
\begin{scope}[xshift=6cm, yshift=-0cm]
\draw[fill] (2,0) circle (1pt) node(a01){};
\draw (2,-1) node(a11) {$*$};
\draw[fill] (3, -1) circle (1pt) node(a12){};
\draw[fill] (3, 0) circle (1pt) node(a02){};
\draw (3,-2) node (a22){$*$};
\draw[fill] (4, 0) circle (1pt) node (a03){};
\draw[fill] (4, -1) circle (1pt) node (a13){};
\draw[fill](4, -2) circle (1pt) node (a23){};

\draw[mono] (a11)--(a12);
\draw[mono] (a01)--(a02);
\draw[mono] (a02)--(a03);
\draw[mono] (a12)--(a13);
\draw[mono] (a22)--(a23);

\draw[epi] (a01)--(a11);
\draw[epi] (a12)--(a22);
\draw[epi] (a02)--(a12);
\draw[epi] (a03)--(a13);
\draw[epi] (a13)--(a23);

\begin{scope}[yshift=-0.3cm]
  \draw[twoarrowlonger] (2.2,0.1)--(2.8,-0.5);
  \draw[twoarrowlonger] (3.2,0.1)--(3.8,-0.5);
  \draw[twoarrowlonger] (3.2,-0.9)--(3.8,-1.5);
\end{scope}
\end{scope}

\draw (6, -1.3) node[anchor=north east] (j5){$\mapsto$};
\begin{scope}[xshift=0cm, yshift=0cm]
     \draw (1,0) node(a00){$*$};
\draw[fill] (2,0) circle (1pt) node(a01){};
\draw (2,-1) node(a11) {$*$};
\draw[fill] (3, -1) circle (1pt) node(a12){};
\draw[fill] (3, 0) circle (1pt) node(a02){};
\draw (3,-2) node (a22){$*$};
\draw[fill] (4, 0) circle (1pt) node (a03){};
\draw[fill] (4, -1) circle (1pt) node (a13){};
\draw[fill](4, -2) circle (1pt) node (a23){};
\draw (4, -3) node (a33){$*$};
\draw[mono] (a00)--(a01);
\draw[mono] (a11)--(a12);
\draw[mono] (a01)--(a02);
\draw[mono] (a02)--(a03);
\draw[mono] (a12)--(a13);
\draw[mono] (a22)--(a23);
\draw[epi] (a01)--(a11);
\draw[epi] (a12)--(a22);
\draw[epi] (a02)--(a12);
\draw[epi] (a03)--(a13);
\draw[epi] (a13)--(a23);
\draw[epi] (a23)--(a33);
\begin{scope}[yshift=-0.3cm]
  \draw[twoarrowlonger] (2.2,0.1)--(2.8,-0.5);
  \draw[twoarrowlonger] (3.2,0.1)--(3.8,-0.5);
  \draw[twoarrowlonger] (3.2,-0.9)--(3.8,-1.5);
\end{scope}
\end{scope}
    \end{tikzpicture}
\end{center}
and can be understood to be an equivalence of spaces using the properties of double Segality and augmentation for $\cD$.
    \item Next, there is an equivalence of spaces
    \[F^{(1)}\xrightarrow{\simeq}
    \cD_{1,1}\times^h_{\cD_{0,0}}\cD_{0,1}\times^h_{\cD_{0,0}}\cD_{-1}\times^h_{\cD_{0,0}}\cD_{0,1}\times^h_{\cD_{0,0}}\cD_{-1}=:F^{(2)},\]
    which can be depicted as
   \begin{center}
    \begin{tikzpicture}[scale=0.7]
    \draw[thick] (6.5,0.5) rectangle (9.5,-2.5);
    \draw[thick] (1.5,0.5) rectangle (4.5,-2.5);
\begin{scope}[xshift=5cm, yshift=-0cm]
\draw (2,-1) node(a11) {$*$};
\draw[fill] (3, -1) circle (1pt) node(a12){};
\draw[fill] (3, 0) circle (1pt) node(a02){};
\draw (3,-2) node (a22){$*$};
\draw[fill] (4, 0) circle (1pt) node (a03){};
\draw[fill] (4, -1) circle (1pt) node (a13){};

\draw[mono] (a11)--(a12);
\draw[mono] (a02)--(a03);
\draw[mono] (a12)--(a13);

\draw[epi] (a12)--(a22);
\draw[epi] (a02)--(a12);
\draw[epi] (a03)--(a13);

\begin{scope}[yshift=-0.3cm]
  \draw[twoarrowlonger] (3.2,0.1)--(3.8,-0.5);
\end{scope}
\end{scope}

\draw (6, -0.7) node[anchor=north east] (j5){$\mapsto$};
\begin{scope}[xshift=0cm, yshift=-0cm]
\draw[fill] (2,0) circle (1pt) node(a01){};
\draw (2,-1) node(a11) {$*$};
\draw[fill] (3, -1) circle (1pt) node(a12){};
\draw[fill] (3, 0) circle (1pt) node(a02){};
\draw (3,-2) node (a22){$*$};
\draw[fill] (4, 0) circle (1pt) node (a03){};
\draw[fill] (4, -1) circle (1pt) node (a13){};
\draw[fill](4, -2) circle (1pt) node (a23){};

\draw[mono] (a11)--(a12);
\draw[mono] (a01)--(a02);
\draw[mono] (a02)--(a03);
\draw[mono] (a12)--(a13);
\draw[mono] (a22)--(a23);

\draw[epi] (a01)--(a11);
\draw[epi] (a12)--(a22);
\draw[epi] (a02)--(a12);
\draw[epi] (a03)--(a13);
\draw[epi] (a13)--(a23);

\begin{scope}[yshift=-0.3cm]
  \draw[twoarrowlonger] (2.2,0.1)--(2.8,-0.5);
  \draw[twoarrowlonger] (3.2,0.1)--(3.8,-0.5);
 \draw[twoarrowlonger] (3.2,-0.9)--(3.8,-1.5);
\end{scope}
\end{scope}
    \end{tikzpicture}
\end{center}
and can be understood to be an equivalence of spaces using the property of double Segality and stability for $\cD$.
    \item Finally, there is an equivalence of spaces
    \[F^{(2)}\xrightarrow{\simeq}\cD_{1,1},\]
    which can be depicted as

\begin{center}
    \begin{tikzpicture}[scale=0.7]
    \draw[thick] (6.5,0.5) rectangle (9.5,-2.5);
    \draw[thick] (1.5,0.5) rectangle (4.5,-2.5);
\begin{scope}[xshift=0cm, yshift=-0cm]
\draw (2,-1) node(a11) {$*$};
\draw[fill] (3, -1) circle (1pt) node(a12){};
\draw[fill] (3, 0) circle (1pt) node(a02){};
\draw (3,-2) node (a22){$*$};
\draw[fill] (4, 0) circle (1pt) node (a03){};
\draw[fill] (4, -1) circle (1pt) node (a13){};

\draw[mono] (a11)--(a12);
\draw[mono] (a02)--(a03);
\draw[mono] (a12)--(a13);

\draw[epi] (a12)--(a22);
\draw[epi] (a02)--(a12);
\draw[epi] (a03)--(a13);

\begin{scope}[yshift=-0.3cm]
  \draw[twoarrowlonger] (3.2,0.1)--(3.8,-0.5);
\end{scope}
\end{scope}

\draw (6, -0.7) node[anchor=north east] (j5){$\mapsto$};
\begin{scope}[xshift=5cm, yshift=-0cm]
\draw[fill] (3, -1) circle (1pt) node(a12){};
\draw[fill] (3, 0) circle (1pt) node(a02){};
\draw[fill] (4, 0) circle (1pt) node (a03){};
\draw[fill] (4, -1) circle (1pt) node (a13){};

\draw[mono] (a02)--(a03);
\draw[mono] (a12)--(a13);

\draw[epi] (a02)--(a12);
\draw[epi] (a03)--(a13);

\begin{scope}[yshift=-0.3cm]
  \draw[twoarrowlonger] (3.2,0.1)--(3.8,-0.5);
\end{scope}
\end{scope}
    \end{tikzpicture}
\end{center}
and can be understood to be an equivalence of spaces using the property of double Segality and augmentation for $\cD$.
\end{itemize}
The desired result then follows.
\end{proof}

We can now prove the proposition:

\begin{proof}[Proof of \cref{counit}]
For all $a,b\geq0$, we consider the map of spaces
\[
\cP_{a,b}(S_\bullet\cD)=S_{a+1+b}\cD\to\cD_{a,b}\]
which ``selects the internal grid of size $[a]\times[b]$''. For instance, when $a=1$ and $b=2$ it can be depicted as
\begin{center} 
\begin{tikzpicture}[scale=0.7]

\draw (7.3, -1.8) node[anchor=north east] (j5){{$\mapsto$}};
 \draw[thick] (8.5,0.5) rectangle (13.5,-4.5);
\begin{scope}[xshift=9cm, yshift=-0cm]
\draw[fill] (2,0) circle (1pt) node(a01){};
\draw[fill] (2,-1) circle (1pt) node(a11){};
\draw[fill] (3, -1) circle (1pt) node(a12){};
\draw[fill] (3, 0) circle (1pt) node(a02){};
\draw[fill] (4, 0) circle (1pt) node (a03){};
\draw[fill] (4, -1) circle (1pt) node (a13){};

\draw[mono] (a11)--(a12);
\draw[mono] (a01)--(a02);
\draw[mono] (a02)--(a03);
\draw[mono] (a12)--(a13);

\draw[epi] (a01)--(a11);
\draw[epi] (a02)--(a12);
\draw[epi] (a03)--(a13);

\begin{scope}[yshift=-0.3cm]
  \draw[twoarrowlonger] (2.2,0.1)--(2.8,-0.5);
  \draw[twoarrowlonger] (3.2,0.1)--(3.8,-0.5);
\end{scope}
\end{scope}
\draw[thick] (0,0.5) rectangle (5,-4.5);
\begin{scope}[xshift=-0.5cm]
     \draw (1,0) node(a00){$*$};
\draw[fill] (2,0) circle (1pt) node(a01){};
\draw (2,-1) node(a11) {$*$};
\draw[fill] (3, -1) circle (1pt) node(a12){};
\draw[fill] (3, 0) circle (1pt) node(a02){};
\draw (3,-2) node (a22){$*$};
\draw[fill] (4, 0) circle (1pt) node (a03){};
\draw[fill] (4, -1) circle (1pt) node (a13){};
\draw[fill](4, -2) circle (1pt) node (a23){};
\draw (4, -3) node (a33){$*$};
\draw[fill] (5, 0) circle (1pt) node (a04){};
\draw[fill] (5, -1)circle (1pt) node (a14){};
\draw[fill] (5, -2)circle (1pt) node (a24){};
\draw[fill] (5, -3)circle(1pt) node (a34){};
\draw (5, -4) node (a44){$*$};
\draw[mono] (a00)--(a01);
\draw[mono] (a11)--(a12);
\draw[mono] (a01)--(a02);
\draw[mono] (a02)--(a03);
\draw[mono] (a12)--(a13);
\draw[mono] (a22)--(a23);
\draw[mono] (a03)--(a04);
\draw[mono] (a13)--(a14);
\draw[mono] (a23)--(a24);
\draw[mono] (a33)--(a34);
\draw[epi] (a01)--(a11);
\draw[epi] (a12)--(a22);
\draw[epi] (a02)--(a12);
\draw[epi] (a03)--(a13);
\draw[epi] (a13)--(a23);
\draw[epi] (a23)--(a33);
\draw[epi] (a04)--(a14);
\draw[epi] (a14)--(a24);
\draw[epi] (a24)--(a34);
\draw[epi] (a34)--(a44);
\begin{scope}[yshift=-0.3cm]
   \draw[twoarrowlonger] (2.2,0.1)--(2.8,-0.5);
   \draw[twoarrowlonger] (3.2,0.1)--(3.8,-0.5);
   \draw[twoarrowlonger] (3.2,-0.9)--(3.8,-1.5);
   \draw[twoarrowlonger] (4.2,0.1)--(4.8,-0.5);
   \draw[twoarrowlonger] (4.2,-1.9)--(4.8,-2.5);
   \draw[twoarrowlonger] (4.2,-0.9)--(4.8,-1.5);
\end{scope}
\end{scope}
\end{tikzpicture}
\end{center}

This map can be checked to be natural in $a,b\geq0$, and it hence defines a map
of preaugmented bisimplicial spaces
\[
\epsilon\colon \cP S_{\bullet}\cD\to\cD.
\]
By \cref{Sdot2Segal,PathSADSS}, both $\cP S_{\bullet}\cD$ and $\cD$ are stable augmented double Segal spaces, so $\epsilon$ is a map of stable augmented double Segal spaces.
In order to show that it is an equivalence of stable augmented double Segal spaces, it suffices to check that it induces equivalences of spaces $\epsilon_{-1}$, $\epsilon_{0,0}$, $\epsilon_{0,1}$, $\epsilon_{1,0}$, and $\epsilon_{1,1}$, which we now analyze.

The map $\epsilon_{\cD,-1}$ is an isomorphism of spaces
\[\epsilon_{-1}\colon(\cP S_{\bullet}\cD)_{-1}= S_{0}\cD=\Map(\cP\Delta[0],\cD)\cong \cD_{-1}.\]
The map $\epsilon_{0,0}$ can be identified with an instance of the equivalence of spaces from \cref{FirstRow}:
\[\epsilon_{0,0}\colon(\cP S_{\bullet}\cD)_{0,0}=S_{1}\cD
    =\Map(\cP\Delta[1],\cD)
    \simeq \cD_{0,0}.\]
The map $\epsilon_{0,1}$, resp.~$\epsilon_{1,0}$), can be identified with an instance of the equivalence of spaces from \cref{FirstRow}, resp.~of an appropriate dual statement of \cref{FirstRow}:
\[
    \epsilon_{1,0}\colon(\cP S_{\bullet}\cD)_{1,0}=S_{2}\cD\cong\Map(\cP\Delta[2],\cD)\simeq \cD_{1,0},\]
resp.~
\[\epsilon_{0,1}\colon (\cP S_{\bullet}\cD)_{0,1}=S_{2}\cD=\Map(\cP\Delta[2],\cD)\simeq \cD_{0,1}.\]
Finally $\epsilon_{1,1}$ can be identified with an instance of the equivalence of spaces from \cref{Filtration11}:
\[\epsilon_{1,1}\colon(\cP S_{\bullet}\cD)_{1,1}=S_{1}\cD\cong\Map(\cP\Delta[3],\cD)\simeq \cD_{1,1}.\]
This concludes the proof.
\end{proof}

We can finally establish the desired correspondence:

\begin{thm}
\label{EquivalenceV1}
The path construction and $S_\bullet$-construction -- modulo injectively fibrant replacement -- induce inverse bijections:
    \[
    \cP\colon\mathrm{2SegSp}/_\simeq\ \cong\ \mathrm{saDblSegSp}/_\simeq\colon S_\bullet\circ\widetilde{(-)}.
    \]
\end{thm}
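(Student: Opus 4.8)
The plan is to assemble the theorem from the two propositions \cref{unit} and \cref{counit}, which already contain all the genuine content; what remains is bookkeeping at the level of equivalence classes. First I would recall that, by \cref{Phomotopical} and the remark immediately following it, the path construction descends to a well-defined function $\cP\colon \mathrm{2SegSp}/_\simeq \to \mathrm{saDblSegSp}/_\simeq$, and that by \cref{Shomotopical} together with the existence of injectively fibrant replacements, the composite $S_\bullet\circ\widetilde{(-)}$ descends to a well-defined function $\mathrm{saDblSegSp}/_\simeq \to \mathrm{2SegSp}/_\simeq$. It therefore suffices to verify that these two functions are mutually inverse, that is, that each of the two composites fixes every equivalence class.

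For the composite $(S_\bullet\circ\widetilde{(-)})\circ\cP$, I would start from a $2$-Segal space $X$ and observe that its image is the class of $S_\bullet\widetilde{\cP X}$. By \cref{unit} there is an equivalence of $2$-Segal spaces $\eta^h\colon X\xrightarrow{\simeq} S_\bullet\widetilde{\cP X}$, which places $X$ and $S_\bullet\widetilde{\cP X}$ in the same class of the equivalence relation generated by $\simeq$. Hence this composite fixes $[X]$.

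For the composite $\cP\circ(S_\bullet\circ\widetilde{(-)})$, I would start from a stable augmented double Segal space $\cD$, whose image is the class of $\cP S_\bullet\widetilde{\cD}$. Here the placement of the tilde is essential: because $\widetilde{\cD}$ is injectively fibrant---and, by the properties recalled just before \cref{Sdot2Segal}, again a stable augmented double Segal space---the hypotheses of \cref{counit} are satisfied, yielding an equivalence $\epsilon\colon \cP S_\bullet\widetilde{\cD}\xrightarrow{\simeq}\widetilde{\cD}$. Combining this with the equivalence $\cD\xrightarrow{\simeq}\widetilde{\cD}$ coming from the fibrant replacement, one sees that $[\cP S_\bullet\widetilde{\cD}]=[\widetilde{\cD}]=[\cD]$, so this composite fixes every class as well.

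The only point requiring care---and the closest thing to an obstacle---is the asymmetry of $\simeq$: the relation is a priori only reflexive and transitive, so one must work with the equivalence relation it generates. This causes no difficulty here, since \cref{unit} and \cref{counit} produce honest equivalences directly relating the relevant objects, and an equivalence in either direction already identifies the two objects in the generated equivalence relation. With both composites shown to restrict to the identity on the respective sets of equivalence classes, the two functions are mutually inverse bijections, which is precisely the assertion of the theorem.
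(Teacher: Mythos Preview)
Your proof is correct and follows essentially the same approach as the paper, which simply states that the theorem follows from \cref{unit} and \cref{counit}. You have merely made explicit the bookkeeping about well-definedness on equivalence classes and the handling of the fibrant replacement that the paper leaves to the reader.
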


\begin{proof}
The statement follows from \cref{unit,counit}.
\end{proof}

The stronger version of the statement, which is the main result from \cite{BOORS3}, asserts that the correspondence from \cref{EquivalenceV1} can actually be upgraded to an equivalence of $\infty$-categories. In this context, an equivalence of $\infty$-categories can be understood as an equivalence of simplicial categories. Recall from \cite[\textsection2.4]{DwyerKanFunction} that an equivalence of simplicial categories (a.k.a.~a \emph{Dwyer--Kan equivalence}) is a simplicial functor which is essentially surjective in an appropriate sense and induces equivalences at the level of mapping spaces.

\begin{thm}[{\cite{BOORS3}}]
\label{EquivalenceV2}
The path construction and the $S_\bullet$-construction (modulo injectively fibrant replacement) induce homotopy inverse Dwyer-Kan equivalences:
\[
\bm{\cP}\colon\bm{\mathrm{2SegSp}}\simeq\bm{\mathrm{saDblSegSp}}\colon \bm{S_\bullet\circ\widetilde{(-)}}.
\]
\end{thm}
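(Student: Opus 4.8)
The plan is to upgrade the object-level equivalences of \cref{unit,counit} into natural equivalences between the relevant round-trip composites of simplicial functors, and then to invoke a standard recognition principle: a pair of simplicial functors, each equipped with a natural weak equivalence between the identity and the corresponding composite, is automatically a pair of homotopy inverse Dwyer--Kan equivalences. Both functors $\bm{\cP}$ and $\bm{S_\bullet\circ\widetilde{(-)}}$ are already in hand, since $\cP$ and $S_\bullet\circ\widetilde{(-)}$ are homotopical by \cref{Phomotopical,Shomotopical} and the hammock localization sends homotopical functors to simplicial functors by \cite[\textsection1.5]{BarwickKan}. The remaining work is therefore to produce the two natural equivalences of composites and to extract from them essential surjectivity and homotopical full faithfulness.

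First I would treat the unit. The comparison $\eta^h\colon X\to S_\bullet\widetilde{\cP X}$ of \cref{unit} is assembled from the canonical map $\eta\colon X\to S_\bullet\cP X$ and a functorial injectively fibrant replacement $\cP X\to\widetilde{\cP X}$, both of which are natural in $X$; hence $\eta^h$ defines a natural transformation $\id_{\mathrm{2SegSp}}\Rightarrow S_\bullet\circ\widetilde{(-)}\circ\cP$ of homotopical endofunctors of $\mathrm{2SegSp}$ which, by \cref{unit}, is a pointwise equivalence of $2$-Segal spaces. After localizing, this exhibits $\bm{S_\bullet\circ\widetilde{(-)}}\circ\bm{\cP}$ as naturally equivalent to $\id_{\bm{\mathrm{2SegSp}}}$.

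Next I would treat the counit, where the fibrant replacement must be threaded through with care. Applying \cref{counit} to the fibrant object $\widetilde{\cD}$ yields a map $\epsilon_{\widetilde{\cD}}\colon\cP S_\bullet\widetilde{\cD}\to\widetilde{\cD}$, while the replacement map $r_{\cD}\colon\cD\to\widetilde{\cD}$ is a pointwise equivalence natural in $\cD$. These assemble into a natural zigzag
\[
\cP S_\bullet\widetilde{\cD}\xrightarrow{\ \epsilon_{\widetilde{\cD}}\ }\widetilde{\cD}\xleftarrow{\ r_{\cD}\ }\cD
\]
of pointwise equivalences; since $r_{\cD}$ is inverted in the localization, this exhibits $\bm{\cP}\circ\bm{S_\bullet\circ\widetilde{(-)}}$ as naturally equivalent to $\id_{\bm{\mathrm{saDblSegSp}}}$. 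The care required here -- that a pointwise natural weak equivalence of homotopical functors, and more generally a natural zigzag of such, induces a genuine natural equivalence of the associated simplicial functors -- is exactly the functoriality of the hammock localization of \cite[\textsection1.5]{BarwickKan} (cf.~\cite{DwyerKanCalculating}). This is the step I expect to be the main technical obstacle, together with the bookkeeping needed to make the replacement $\widetilde{(-)}$ coherently natural so that the counit zigzag is genuinely natural in $\cD$.

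Finally, from the two natural equivalences I would conclude as follows. Essential surjectivity of $\bm{\cP}$ is immediate, since every stable augmented double Segal space $\cD$ is equivalent to $\bm{\cP}\bigl(\bm{S_\bullet\circ\widetilde{(-)}}\,\cD\bigr)$ by the counit. For the mapping spaces, the composite
\[
\Map(X,Y)\xrightarrow{\ \bm{\cP}\ }\Map(\bm{\cP}X,\bm{\cP}Y)\xrightarrow{\ \bm{S_\bullet\circ\widetilde{(-)}}\ }\Map\bigl(\bm{S_\bullet\circ\widetilde{(-)}}\bm{\cP}X,\bm{S_\bullet\circ\widetilde{(-)}}\bm{\cP}Y\bigr)
\]
is identified, via the unit natural equivalence, with the equivalence $\id_*$; the same reasoning applied to $\bm{S_\bullet\circ\widetilde{(-)}}$ using the counit shows that the corresponding second composite is an equivalence as well. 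A standard two-out-of-six argument then forces both maps on mapping spaces to be equivalences, so $\bm{\cP}$ and $\bm{S_\bullet\circ\widetilde{(-)}}$ are homotopy inverse Dwyer--Kan equivalences in the sense of \cite[\textsection2.4]{DwyerKanFunction}. This establishes \cref{EquivalenceV2}, and passing to path components recovers the bijections of \cref{EquivalenceV1}.
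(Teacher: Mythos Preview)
Your argument is correct and takes a genuinely different route from the paper's. The paper does not work directly with the hammock localizations at all: instead it invokes the model structures on simplicial spaces and on preaugmented bisimplicial spaces constructed in \cite[\textsection4.2]{BOORS3} and \cite[\textsection5.3]{DKbook}, cites \cite[Theorem~6.1]{BOORS3} for the fact that $\cP$ and $S_\bullet$ form a Quillen equivalence between these model categories, and then appeals to the general principle (from \cite{MazelGee}) that a Quillen equivalence induces a Dwyer--Kan equivalence on underlying $\infty$-categories. Your approach is more elementary and more self-contained relative to this note: you use only the pointwise equivalences $\eta^h$ and $\epsilon$ already established in \cref{unit,counit}, the homotopicality results \cref{Phomotopical,Shomotopical}, and the $2$-functoriality of hammock localization, together with a two-out-of-six argument on mapping spaces. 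What the paper's route buys is that the heavy lifting (naturality of the comparison maps, coherence of the fibrant replacement, and compatibility with mapping spaces) is absorbed into the standard model-categorical machinery and the cited Quillen equivalence; what your route buys is that one never needs the localized model structures of \cite{BOORS3,DKbook} or the general Quillen-to-Dwyer--Kan transfer result, only the existence of a functorial injectively fibrant replacement. The bookkeeping you flag---naturality of $\epsilon$ and of $\widetilde{(-)}$, and the passage from natural zigzags of weak equivalences to natural equivalences of simplicial functors---is genuine but routine, and is exactly the sort of thing the model-categorical packaging in the paper's proof is designed to handle automatically.
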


We briefly comment on the machinery used to prove the theorem. We refer the reader to standard model categorical references (\cite{Hirschhorn,Hovey}) for the model categorical terminology.

\begin{proof}[Proof strategy]
It is described in \cite[\textsection4.2]{BOORS3} (resp.~\cite[\textsection5.3]{DKbook}) how to put model structures on the category of simplicial spaces (resp.~preaugmented bisimplicial spaces) in which the fibrant-cofibrant objects are the injectively fibrant $2$-Segal spaces (resp.~injectively fibrant stable augmented double Segal spaces). The underlying $\infty$-category is then equivalent as a simplicial category to the one mentioned in \cref{2SegSp} (resp.~\cref{saDblSegSp}). It is shown as \cite[Theorem~6.1]{BOORS3} that the path construction $\cP$ and the $S_\bullet$-construction form a Quillen equivalence. Using the results from \cite{MazelGee}, one then deduces that the path construction $\cP$ and the $S_\bullet$-construction induce inverse Dwyer-Kan equivalences of simplicial categories, as desired.
\end{proof}

\bibliographystyle{amsalpha}
\bibliography{ref}

\end{document}